\newcommand{\IN}{\mathbb{N}}
\newcommand{\IZ}{\mathbb{Z}}
\newcommand{\IR}{\mathbb{R}}
\renewcommand{\H}{\mathcal{H}}
\newcommand{\C}{\mathcal{C}}
\newcommand{\N}{{{\mathcal N}}}
\newcommand{\E}{{{\mathcal E}}}
\newcommand{\A}{{{\mathcal A}}}
\newcommand{\grad}{\nabla}
\newcommand{\abs}[1]{\mathopen\vert#1\mathclose\vert}
\newcommand{\bigabs}[1]{\bigl| #1\bigr|}
\newcommand{\norm}[1]{\mathopen\Vert#1\mathclose\Vert}
\newcommand{\inner}[2]{\langle #1 \vert #2\rangle}
\newcommand{\biginner}[2]{\bigl\langle #1 \,{\bigm|}\, #2\bigr\rangle}
\newcommand{\intd}{\,{\mathrm d}}
\renewcommand{\phi}{\varphi}
\renewcommand{\epsilon}{\varepsilon}
\newcommand{\intervaloo}[1]{(#1)}
\newcommand{\intervalco}[1]{[#1)}
\newcommand{\intervaloc}[1]{(#1]}
\newcommand{\intervalcc}[1]{[#1]}
\newcommand{\bigintervaloo}[1]{\bigl(#1\bigr)}
\newcommand{\bigintervalco}[1]{\bigl[#1\bigr)}
\renewcommand{\subset}{\subseteq}
\newcommand{\limplies}{\Rightarrow}
\DeclareMathOperator{\Ran}{Ran}
\DeclareMathOperator{\dist}{dist}
\DeclareMathOperator{\spanned}{\overline{span}}
\theoremstyle{plain}
\newtheorem{Prop}{Proposition}[section]
\newtheorem{Thm}[Prop]{Theorem}
\newtheorem{Lem}[Prop]{Lemma}
\newtheorem{algo}[Prop]{Algorithm}
\theoremstyle{definition}
\newtheorem{Def}[Prop]{Definition}
\theoremstyle{remark}
\newtheorem{Rem}[Prop]{Remark}
\definecolor{fixme}{RGB}{224,15,182}
\newcommand{\setlabel}[2]{%
  \def\@currentlabel{#2}%
  \label{#1}%
}
\definecolor{mygreen}{rgb}{0.2,0.47,0.2}
\definecolor{myblue}{rgb}{0.23,0.33,0.95}
\definecolor{myred}{rgb}{0.67,0.03,0.06}
\begin{document}

%
\title[Mountain pass algorithm for indefinite problems and systems]{%
  Convergence of a mountain pass type algorithm for
  strongly indefinite problems and systems}
\author[Ch.~Grumiau, Ch.~Troestler]{Grumiau Christopher, Troestler Christophe}

\address{%
 Institut Complexys\\
 D{\'e}partement de Math{\'e}matique\\
 Universit{\'e} de Mons, \\
 20, Place du Parc\\
 B-7000 Mons\\
 Belgium
}

\email{christopher.grumiau@umons.ac.be}
\email{christophe.troestler@umons.ac.be}

\thanks{The authors are partially supported by a grant from the
  National Bank of Belgium and by the program
``Qualitative study of
 solutions of variational elliptic partial differerential equations. Symmetries,
bifurcations, singularities, multiplicity and numerics'' of the FNRS, project
2.4.550.10.F of the Fonds de la Recherche Fondamentale Collective.}

\subjclass{\sloppy Primary: 35J20, Secondary: 58E05, 58E30, 35B38}

\keywords{Mountain pass algorithm, minimax, steepest descent method,
  Schr\"odinger equation, spectral gap,
  strongly indefinite functional, ground state solutions,
  Nehari manifold, systems}


\begin{abstract}
  For  a functional $\E$ and a peak selection that picks up a global
  maximum of $\E$ on varying cones, we study the convergence up to
  a subsequence to a critical point of the sequence generated by a
  mountain pass type algorithm. Moreover,
  by carefully choosing stepsizes, we establish the convergence of the whole
  sequence under a ``localization'' assumption on the critical point.
  We illustrate our results with two problems: an indefinite
  Schrödinger equation
  and a superlinear Schrödinger system.
\end{abstract}

\maketitle

\section{Introduction}

Let us consider $\H$ a Hilbert space with inner product
$\inner{\cdot}{\cdot}$ and norm $\norm{\cdot}$, and a functional $\E
\in \C^1(\H; \IR)$.
In this work, we develop a provably convergent ``general'' mountain pass type
algorithm to approximate saddle points of $\E$,
with a Morse index possibly larger than one.
The pioneer work in this direction is due to  Y.~S.~Choi and
P.~J.~McKenna~\cite{mckenna} who proposed a
constrained steepest descent method to compute saddle points with
one ``descent direction'' (such as a Mountain Pass solution).
A proof of convergence of a variant of that algorithm
was later given by Y.~Li and J.~Zhou in~\cite{zhou1,zhou2}.
To briefly describe it,
let us fix a closed subspace $E$ of $\H$ and $\phi$
a continuous $E^\perp$-peak selection,
i.e.\ $\phi(u)$ is the location of a maximum of
$\E$ on $E \oplus \IR^+ u := \{ e + ty \mid e \in E,\  t \ge 0\}$
for any $u\in \H\setminus E$ and $\phi$ is constant on $E \oplus \IR^+ u$.
As it will be convenient in the rest of the paper that $\phi$ is
not solely defined on a unit sphere, we present here a slightly
different version~\cite{tt}.

\begin{algo}[Mountain Pass Algorithm]
  \label{mpa}
  \begin{enumerate}
  \item Choose $u_0\in\Ran \phi$, $\varepsilon >0$ and $n\leftarrow 0$;
  \item if $\norm{\nabla \E(u_n)}\leq \varepsilon $ then  stop; \\
    else compute
    \begin{equation*}
      u_{n+1}
      = \phi\left(u_n - s_n
        \frac{\nabla \E(u_n)}{\norm{\nabla \E(u_n)}}\right),
    \end{equation*}
    for some $s_n\in S(u_n) \subset \intervaloo{0, +\infty}$
    where $S(u_n)$ is a set of ``admissible stepsizes''
    chosen so that at least the following inequality holds:
    \begin{equation*}
      \E(u_{n+1})-\E(u_n)
      < - \tfrac{1}{2} s_n \norm{\nabla \E(u_n)};
    \end{equation*}
  \item let $n\leftarrow n+1$ and go to step $2$.
  \end{enumerate}
\end{algo}

Y.~Li and J.~Zhou proved that $(u_n)$ converges
to a nontrivial critical point of $\E$ up
to a subsequence.
The proof of convergence is performed in the space $\H$
to ensure that the rate of
convergence for the discretized problem does not deteriorate
when the approximating subspace becomes finer.
The original goal of the authors for
introducing $E$  was to
try to obtain multiple critical points by taking $E$
as the linear subspace generated by previously found solutions
which the algorithm must
try to avoid.  The proof is performed in two steps.
First, they show
that $s_n$  exists and that $\E$ decreases along $(u_n)_{n\in\IN}$.
This step relies on the following deformation lemma.
\begin{Lem}
  \label{Udeflemma}
  If $\phi$ is continuous, $u_0\in \Ran \phi$, $u_0\notin E$, $\nabla
  \E(u_0)\neq 0$,
  then there exists $s_0>0$ such that
  \begin{equation*}
    \forall s \in \intervaloc{0, s_0},\quad
    \E\bigl(\phi(u_{s})\bigr) - \E(u_0)
    < - \tfrac{1}{2} s \norm{\nabla \E(u_0)},
  \end{equation*}
  where
  \begin{equation*}
    u_{s}
    := u_0 - s \frac{\nabla \E(u_0)}{\norm{\nabla \E(u_0)}} .
  \end{equation*}
\end{Lem}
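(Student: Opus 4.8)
The plan is to exploit two consequences of $u_0$ being a peak: that $u_0$ is a fixed point of $\phi$, and that $\nabla\E(u_0)$ is orthogonal to the whole cone. Since $u_0\in\Ran\phi$, we have $u_0=\phi(w)$ for some $w\notin E$; as $\phi$ is constant on $E\oplus\IR^+ w$ and $u_0\notin E$, that cone coincides with $E\oplus\IR^+ u_0$, whence $\phi(u_0)=u_0$ and $u_0$ maximises $\E$ over $E\oplus\IR^+ u_0$. Reading $u_0$ as the interior point $0+1\cdot u_0$ of this cone, the first order conditions give $\inner{\nabla\E(u_0)}{\eta}=0$ for all $\eta\in E$ and $\inner{\nabla\E(u_0)}{u_0}=0$. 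Setting $V:=E\oplus\IR u_0$, which is closed because $E$ is closed and $u_0\notin E$, and $n:=\nabla\E(u_0)/\norm{\nabla\E(u_0)}$, this means precisely that $n\perp V$.

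Next I would put $w_s:=\phi(u_s)$ and use the continuity of $\phi$ together with $\phi(u_0)=u_0$ to obtain $w_s\to u_0$ as $s\to 0^+$. Writing $w_s=e_s+t_s u_s$ with $e_s\in E$, $t_s\ge 0$ and inserting $u_s=u_0-s\,n$ gives $w_s=(e_s+t_s u_0)-t_s s\,n$; here $e_s+t_s u_0\in V$ while $-t_s s\,n\perp V$, so the orthogonal projection of $w_s$ onto $V$ is $p_s:=e_s+t_s u_0$ and $w_s-p_s=-t_s s\,n$. Continuity of the coordinates on $V$ together with $w_s\to u_0=0+1\cdot u_0$ forces $e_s\to 0$ and $t_s\to 1$; in particular $t_s>0$ for $s$ small, so $p_s\in E\oplus\IR^+ u_0$ and the maximality of $u_0$ yields $\E(p_s)\le\E(u_0)$.

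The gain then comes from travelling from $p_s$ to $w_s$ in the single direction $-n$. The fundamental theorem of calculus gives
\begin{equation*}
  \E(w_s)-\E(p_s)
  = -t_s s\int_0^1 \inner{\nabla\E(p_s-\theta t_s s\,n)}{n}\intd\theta,
\end{equation*}
and, as $s\to 0^+$, the integrand tends uniformly in $\theta$ to $\inner{\nabla\E(u_0)}{n}=\norm{\nabla\E(u_0)}$, so the integral equals $\norm{\nabla\E(u_0)}+o(1)$. With $\E(p_s)\le\E(u_0)$ this produces
\begin{equation*}
  \E(w_s)-\E(u_0)\le -t_s s\bigl(\norm{\nabla\E(u_0)}+o(1)\bigr).
\end{equation*}
Since $t_s\to 1$ and $\norm{\nabla\E(u_0)}>0$, the coefficient $t_s\bigl(\norm{\nabla\E(u_0)}+o(1)\bigr)$ exceeds $\tfrac12\norm{\nabla\E(u_0)}$ for all sufficiently small $s$, which gives the desired strict inequality on some interval $\intervaloc{0,s_0}$.

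The main obstacle, I expect, is turning the peak property into the analytic facts just used: justifying $\phi(u_0)=u_0$ and the orthogonality $n\perp V$, and then controlling the coordinates $e_s,t_s$ of $w_s$ through the continuity of $\phi$. The orthogonality is the decisive point, since it makes $w_s-p_s$ a pure multiple of $n$, so that the re-maximisation hidden in $\phi$ pushes $w_s$ off $V$ only along the steepest descent direction. One must resist bounding $\E(w_s)$ directly by $\E(u_s)$: because $\E\circ\phi\ge\E$ along each cone this goes the wrong way, and the argument has to be routed through $p_s\in V$, where the maximality of $u_0$ is available.
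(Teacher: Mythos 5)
Your proof is correct, and its skeleton is the same as that of the paper's own generalization of this statement, the uniform deformation Lemma~\ref{deflemma}: write $\phi(u_s)$ as a point of the \emph{unperturbed} cone plus a displacement along (approximately) the steepest descent direction, discard the cone-point term using the maximality of $u_0$ (the paper's step~\eqref{eq:use max E}), and estimate the energy change along the displacement by the gradient. The genuine difference is where that decomposition comes from. The paper postulates it abstractly: assumption~\eqref{Ac2} asserts that an orthogonally perturbed cone stays inside $C_{\tilde u_0}+[1-\delta,1+\delta]A_\gamma(d)$, and the proof of Lemma~\ref{deflemma} invokes this as a black box to write $\phi(\tilde u_s)=v_s+K_s s d_s^*$. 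You derive the decomposition exactly for the Li--Zhou cones: the first-order conditions at the interior maximum give $n:=\nabla\E(u_0)/\norm{\nabla\E(u_0)}\perp E\oplus\IR u_0$, and the identity $e_s+t_s u_s=(e_s+t_s u_0)-t_s s\,n$ shows that $C_{u_s}$ is an exact shear of $C_{u_0}$ along $n$, so your version of~\eqref{Ac2} holds with angle $\gamma=0$ and scaling factor $K_s=t_s\to 1$. This buys a self-contained, more elementary proof of Lemma~\ref{Udeflemma}; what it gives up is uniformity: you obtain the estimate only at $u_0$, whereas the paper's lemma holds for all $\tilde u_0$ in a neighborhood of $u_0$, which (per Remark~\ref{uniform2}) is dispensable for convergence up to a subsequence but is genuinely needed for the stepsizes $\tilde S(u)$ of Section~\ref{convmpa2}. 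Two small points you should make explicit: first, $u_s\notin E$, so that $\phi(u_s)$ is defined --- immediate, since $u_s\in E$ would give $s\,n=u_0-u_s\in E\oplus\IR u_0$, contradicting $n\perp E\oplus\IR u_0$ and $n\ne 0$; second, the continuity of the oblique coordinates $(e_s,t_s)$ on $V=E\oplus\IR u_0$, which you assert but which deserves the standard justification that $E$ is closed, $\IR u_0$ is finite dimensional, the sum is direct, hence the projections onto the two factors are bounded.
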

The second step consists in proving,
under some traditional assumptions on $\phi$, that
a subsequence of $(u_n)$ converges.  For this, it is essential to show
that the stepsize $s_n$ controls the distance between $u_n$ and $u_{n+1}$
and that $s_n$ is chosen in such a way that it is close to $0$
only when ``mandated'' by the functional.  Let us remark that
the choice of $\phi$ is very
sensitive. Indeed,  to seek  sign-changing
critical points, the modified
mountain pass algorithm was introduced by
J.~M.~Neuberger~\cite{Neuberger97} (see also~\cite{cdn}). He
considers  algorithm~\ref{mpa} above and only modifies the projection $\phi$
into a ``sign-changing peak selection'' $\phi_N$ which is a
map defined from
the set of sign-changing functions of
$\H\setminus\{0\}$ to $\H\setminus\{0\}$ such that, for any
$u$, $\E\bigl(\phi_N(u)\bigr)>0$ and $\phi_N(u)$ is a maximum of $\E$ on
$\IR^+ u^+ \oplus \IR^+ u^-$ where $u^+(x):= \max\{0,u(x)\}$ and
$u^-(x):=\min\{0, u(x)\}$.  Although in practice it appears to converge to a
nontrivial sign-changing critical point, its
convergence has yet to be formally proved.

In this paper, $\phi(u)$ is allowed to
pick up a maximum point of $\E$ in an
abstract cone $C_u$
and we are interested in giving
assumptions on $C_u$ which imply the convergence of the
mountain pass algorithm. This work is partly motivated by the
article~\cite{noris} where
the authors define the notion of ``natural
constraints'' to seek nontrivial
critical points of functionals.
Let us first make precise the peak selection $\phi$ that we use.
We write
$\operatorname{int}C$ for the interior of $C$ relative to $\spanned C$,
the smaller closed subspace containing $C$,
for the topology induced by~$\H$.
\begin{Def}
  \label{defpeak}
  Let $\A$ be an open subset of~$\H$.
  We say that $\phi$ is a \emph{peak selection for $(C_u)_{u \in \A}$}
  if $\phi$ is a map from $\A$ to $\A$ such
  that, for all $u \in \A$,
  \begin{enumerate}
  \item $C_u$ is a closed cone pointed at~$0$;
  \item $\phi(u) \in \operatorname{int} C_u$;
  \item\label{phi-uniqueness} for any $v\in \operatorname{int}C_u$,
    $\phi(v)= \phi(u)$;
  \item $\phi(u)$ is a global maximum point of $\E$ on $C_u$.
  \end{enumerate}
\end{Def}

Note that properties (ii) and~(iii) imply that $\phi(\phi(u)) =
\phi(u)$.
We write that $d\perp C_u$ if and only
if $d\perp \spanned C_u $ for the inner product~$\inner{\cdot}{\cdot}$.
In section~\ref{conmpa}, we assume that
$\phi$ is continuous and
that  $(C_u)$ verifies the following conditions:
\begin{align*}
  &\forall u\in \A, \ u\in C_u  \quad\text{and }
  \tag{$AC_1$}\label{Ac1}\\
  \tag{$AC_2$}\label{Ac2}
  \begin{split}
    &\exists \gamma \in \bigintervaloo{0,\frac{\pi}{2}}, \
    \exists \delta \in (0,1),\  \forall u_0\in \Ran\phi,\
    \exists r>0,\
    \forall \tilde{u}_0\in \Ran\phi \cap B(u_0,r),\   \\
    &\quad
    \forall d \in B(0,r),\ d \perp C_{\tilde{u}_0},\quad
    C_{\tilde{u}_0 + d} \cap B(u_0, r)\subset C_{\tilde{u}_0}
    + [1-\delta, 1+\delta] \,  A_\gamma(d)
  \end{split}
\end{align*}
where $A_\gamma(d) := \{ d' \mid \norm{d'} = \norm{d}$ and $\angle
(d',d)\le \gamma\}$ and
$\angle (d,d') := \arccos \bigl(\frac{\inner{d}{d'}}{\norm{d}\norm{d'}}\bigr)$
denotes the angle between two
non-zero vectors $d$ and $d'$ (we set $A_\gamma(0) := \{0\}$).
This assumption, which speaks about the behavior of the cones under
small deformations, is essential to prove a deformation Lemma in this
generalized setting
(see Lemma~\ref{deflemma}).
This lemma ensures the non-emptiness of the
set $S(u_0)$ of admissible stepsizes at $u_0$ which we now define.
For any $u_0\in \Ran \phi$ such that $\nabla \E(u_0)\ne 0$, we set
\begin{equation*}
  S^{*}(u_0) := \Bigl\{ s>0 \Bigm|  u_s := u_0 -
  s\frac{\nabla\E(u_0)}{\norm{\nabla\E(u_0)}} \in \A
  \text{ and }
  \E\bigl(\phi(u_s)\bigr) - \E(u_0)
  < -\alpha s\norm{\nabla \E(u_0)}  \Bigr\}
\end{equation*}
for some value $\alpha >0$ given  by  Lemma~\ref{deflemma}  and  we require that $s\in S(u_0):= S^{*}(u_0)\cap
\bigintervalco{\frac{1}{2}\sup  S^{*}(u_0), +\infty}$.  Other definitions of
admissible stepsizes are possible provided they imply a local
uniformity in the sense that $s_n \in S(u_n)$ forces the stepsize $s_n$ not
to be small when the gradient is not (see Lemma~\ref{convlemma1}).
The definition given above
draws its inspiration from a
paper~\cite{tt} written by N.~Tacheny and C.~Troestler.

\enlargethispage{8mm}

To obtain the convergence of $(u_n)$ up to a subsequence
(see Theorem~\ref{convthm}), we
unfortunately need to replace~\eqref{Ac2} with the following  stronger
assumption:
\begin{equation}
  \tag{$AC_3$}\label{Ac-xi}
  \left.\begin{minipage}{0.83\linewidth}
      there exists a closed subspace $E\subset \H$ (possibly
      infinite dimensional) and a family of
      $\C^1$-vector fields $\xi_i :\A \to E^{\perp}$,
      $i=1,\dots,k$, for some $k\in\IN$, such that
      for all $u \in \A$ and $i \in \{1,\dotsc,k\}$,
      \begin{enumerate}
      \item the family $\bigl(\xi_i(u)\bigr)_{i=1}^k$ is orthonormal;
        \vspace{0.5ex}
      \item $\forall v \in V_u,\ \xi'_i(u)[v] \in V_u$,
        where $V_u := \spanned \{\xi_1(u), \dotsc, \xi_k(u)\}$;
        \vspace{0.5ex}
      \item $\forall v\in \operatorname{int} C_u\cap\A$,
        $\xi_i(v) = \xi_i(u)$;
        \vspace{0.5ex}
      \item $\inner{u}{\xi_i(u)} \ne 0$;
        \vspace{0.5ex}
      \item $\exists r > 0$, $\xi'_i$ is bounded on
        $\{ u \mid \dist(u, \Ran \phi) < r \} \cap \A$.
      \end{enumerate}
      For $u \in \A$, the cone $C_u$ is defined as
      $C_u := E \oplus \{ \sum_i t_i \xi_i(u) \mid
      t_i \ge 0 \text{ for all } i \}$.
    \end{minipage} \ \right\}
\end{equation}
Here, the notation $\dist(u,\partial\A)$ stands for
$\inf\{ \norm{u - v} \mid v \in \partial\A \}$.
Let us remark conditions~\eqref{Ac-xi} (i), (ii), (iv) and (v)
are already present (albeit somehow implicitly for~(iv))
in~\cite{noris} in the context of trivial $\C^1$-subbundles intead of
cones.
The additional condition~(iii)
is equivalent to
$\forall v \in \operatorname{int} C_u\cap\A,\ C_v = C_u$.
This condition is rather natural to require in view of
property~(iii) of the definition of peak selection.
This ``finite presentation'' of the
cones is used  in Lemma~\ref{convlemma2-xi} to ensure that the stepsize $s_n$
controls the distance between $u_{n+1}$ and~$u_n$.

As a particular case of~\eqref{Ac-xi}, let us mention that we can
work with a family of continuous linear projectors
(see Proposition~\ref{conv-P_i}).  This
case is an abstract formulation of the setting of~\cite{sym2,sym1}
where the convergence (up
to a subsequence) of a mountain pass type algorithm for systems
has been announced.

In Section~\ref{convmpa2}, we are interested in the convergence of the whole
sequence generated by the Mountain Pass Algorithm.  To that aim, we
need to refine the definition of $S^*$  in order to control
$\E\bigl(\phi(u_n - s \frac{\grad u_n} {\norm{\grad u_n}})\bigr)$
for any $0<s<s_n$.

In Section~\ref{applmpa}, we illustrate our method with  two semi-linear
problems. The first application takes its inspiration from a paper due to
A.~Szulkin and T.~Weth~\cite{szulkin} in which the authors study the following
Schrödinger problem
\begin{equation}
\label{eqSzulkin}
  \left\{
    \begin{aligned}
      -\Delta u(x) + V(x) u(x)&= \abs{u(x)}^{p-2}u(x),&&\ x\in\Omega, \\
      u(x)&=0,&&\ x \in\partial\Omega ,
    \end{aligned}
  \right.
\end{equation}
where $V:\Omega\to \IR$ is such that $0$ is in a spectral gap
of $-\Delta + V$ and $2<p<2^*:=\frac{2N}{N-2}$
($+\infty$ when $N=2$).  They are interested
in the existence of
non-zero solutions on an open bounded domain $\Omega\subset \IR^N$ or
on $\Omega = \IR^N$ (in the latter case, $V$ is assumed to be $1$-periodic in
each $x_i$, $i=1,\dotsc,N$).   Solutions to this equation are
critical points of the indefinite functional
\begin{equation}
  \label{functional-indefinite}
  \E: \H\to \IR : u\mapsto \frac{1}{2}\int_\Omega\left(
    \abs{\nabla u(x)}^2 + V(x) u(x)^2\right) \intd x
  -\frac{1}{p}\int_\Omega \abs{u (x)}^p\intd x,
\end{equation}
where $\H= H^1_0 (\Omega)$. The first proof of the existence of
non-zero critical points for $\E$ when $-\Delta + V$ is not positive
definite and $\Omega$ is an open bounded domain is due to
P.~H.\ Rabinowitz~\cite{rabin}.  Recently,
A.~Szulkin and T.~Weth proposed an alternative method~\cite{szulkin}
that also makes easier to deal with
the lack of compactness that occurs when $\Omega = \IR^N$.  Denoting $E$
the negative eigenspace of $-\Delta + V$, they introduce
the following nonlinear map
\begin{equation*}
  \phi: \H\setminus E\to \H: u\mapsto \phi(u)
\end{equation*}
where $\phi(u)$ is the point at which
$\E$ reaches its maximum value on $E \oplus \IR^+ u$.  They prove that
minimizing $\E$ on $\Ran \phi = \bigl\{ u\in \H\setminus
E \bigm| \partial\E(u)[v]=0$
for $v=u$ and any $v\in E \bigr\}$ yields a non-zero
solution with least energy.
Notice that, here, $E$  is used to deal with
the indefiniteness of the problem
and not to compute multiple critical points as in
the papers of J.~Zhou \& al.~\cite{zhou1,zhou2}.  We will
prove that our algorithm converges for this problem.  The numerical
solutions that we obtain lead to some conjectures on
the symmetries of ground state solutions.

The second application is based on a paper by
B.~Noris and G.~Verzini \cite{noris}. The
authors study the superlinear Schrödinger system
\begin{equation}
  \label{eqNoris-general}
  \left\{\!
    \begin{aligned}
      -\Delta u_i(x) &= \partial_iF\bigl(u_1(x),\dots,u_k(x)\bigr),
      &&\ x\in\Omega, \\
      u_i(x)&=0,&&\ x \in\partial\Omega ,
    \end{aligned}
  \right.
  \qquad i=1,\dots,k,
\end{equation}
where $k \in \IN$. They require that  $\Omega\subset \IR^N$ is
a bounded smooth
domain and $F\in \C^2(\IR^k; \IR)$.
Note that the system $-\Delta u_i =\mu_i
u_i^3 + u_i \sum_{j\neq i} \beta_{i,j} u_j^2$ where $\mu_i>0$
and $\beta_{i,j} = \beta_{j,i}$ is a particular case
of~\eqref{eqNoris-general}.  Such type of nonlinearities have been studied
due to their applications to nonlinear optics and to Bose-Einstein
condensation (see~\cite{conti,terracini2,dancer,terracini}).
Solutions to~\eqref{eqNoris-general} are
critical points of the functional
\begin{equation}
  \label{functional-system}
  \E: \H\to \IR : u = (u_1,\dotsc, u_k) \mapsto
  \frac{1}{2}\int_\Omega\abs{\nabla u(x)}^2
  \intd x - \int_\Omega F(u)\intd x,
\end{equation}
where $\H = H^1_0 (\Omega; \IR^k)$.
As already mentioned, B.~Noris and G.~Verzini~\cite{noris}
propose a general method of ``natural constraints''.
Applied to the above problem, it goes as follows.
Denote $\A := \{u\in \H \mid
u_i\neq 0 \text{ for every } i\}$.  To find a solution
$u=(u_1,\dots, u_k)$ of~\eqref{eqNoris-general}
with $u_i \ne 0$ for all $i=1,\dots,k$,
they minimize $\E$ on the constraint $\N := \bigl\{u\in \A
\bigm| \forall  i=1,\dots,k,\ \int_{\Omega}
\abs{\nabla u_i}^2\intd x = \int_{\Omega} \partial_i F(u)u_i\intd x
\bigr\}$.   We will show that, under their assumptions,
$\N =\Ran \phi$ with $\phi$ being the peak selection
\begin{equation*}
  \phi : \A \to \A : u \mapsto
  \operatorname{argmax} \bigl\{ \E(t_1 u_1,\dots, t_k u_k) \bigm|
  t_1>0, \dotsc, t_k > 0 \bigr\} .
\end{equation*}
Again, we prove that our algorithm converges for this problem and
some numerical experiments are performed.

\section[Steepest descent method]{Steepest
  descent method on varying cones}
\label{conmpa}

\subsection{Uniform deformation lemma}

Let $\E : \H \to \IR$ be a $\C^1$-functional defined on a Hilbert space
$\H$ and $\A$ an open subset of~$\H$.  The following lemma is
instrumental in proving the convergence of the algorithm.

\begin{Lem}[Uniform deformation lemma]
  \label{deflemma}
  Let $\phi : \A \to \A$ be a peak selection for $(C_u)_{u \in \A}$ and
  $u_0\in \Ran\phi$ be such that $\nabla \E(u_0) \ne 0$.
  Assume that $\phi$ is continuous at~$u_0$
  and that \eqref{Ac1}--\eqref{Ac2} hold.
  Then there exist $s_0>0$ and $r_0 >0$ such that,
  for any $s \in \intervaloc{0, s_0}$ and
  $\tilde{u}_0\in B(u_0, r_0)\cap \Ran \phi$, one has
  \begin{itemize}
  \item $\nabla \E(\tilde{u}_0)\neq 0$,
  \item $\tilde{u}_s\in \A$ where $\tilde{u}_s:= \tilde{u}_0
    -s\frac{\nabla \E(\tilde{u}_0)}{\norm{\nabla \E(\tilde{u}_0)}}$ and
  \item there exists some $\alpha > 0$ solely depending on $\gamma$
    and $\delta$ given in assumption~\eqref{Ac2} such that
    \begin{equation}
      \label{step}
      \E\bigl(\phi(\tilde{u}_s)\bigr) - \E(\tilde{u}_0)
      < -\alpha s\norm{\nabla \E(\tilde{u}_0)}.
    \end{equation}
  \end{itemize}
\end{Lem}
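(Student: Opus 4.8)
The plan is to establish the three claims in order, with the energy-decrease inequality~\eqref{step} being the heart of the matter. First I would set up notation: since $\phi$ is continuous at $u_0$ and $\nabla\E$ is continuous (as $\E\in\C^1$), there is a radius $r_1>0$ such that $\nabla\E(\tilde u_0)\neq 0$ for all $\tilde u_0\in B(u_0,r_1)\cap\Ran\phi$; this gives the first bullet and makes $\tilde u_s$ well-defined. For the second bullet, because $\A$ is open and $u_0\in\A$, after possibly shrinking to a radius $r_2$ one has $B(u_0,2r_2)\subset\A$, and then choosing $s_0$ and $r_0\le r_2$ small enough that $\norm{\tilde u_s-u_0}\le\norm{\tilde u_0-u_0}+s<r_2+s_0<r_2$ keeps $\tilde u_s\in\A$. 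These two bullets are routine continuity/openness arguments.

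The core is the energy inequality. The natural strategy is a first-order Taylor expansion of $\E$ along the descent ray combined with the fact that $\phi(\tilde u_s)$ is a \emph{maximum} of $\E$ on the cone $C_{\tilde u_s}$, so $\E(\phi(\tilde u_s))\ge\E(w)$ for \emph{every} competitor $w\in C_{\tilde u_s}$; I would exploit this by exhibiting a clever competitor whose energy I can bound from below, thereby forcing $\E(\phi(\tilde u_s))$ to be large enough --- wait, the inequality~\eqref{step} is an \emph{upper} bound on $\E(\phi(\tilde u_s))$, so the maximum property must be used in the opposite direction. The right idea is: decompose $\tilde u_s=\tilde u_0-s\,\nabla\E(\tilde u_0)/\norm{\nabla\E(\tilde u_0)}$ and split the gradient into its component along $\spanned C_{\tilde u_0}$ and its orthogonal complement $d$. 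Along the tangential part the point $\phi$ picks barely changes energy since $\tilde u_0\in\Ran\phi$ sits at a maximum of $\E$ on $C_{\tilde u_0}$, so $\nabla\E(\tilde u_0)$ is (nearly) perpendicular to $C_{\tilde u_0}$; thus the effective descent is governed by the orthogonal part $d$, whose norm is comparable to $s$. Here assumption~\eqref{Ac2} enters decisively: it controls how $C_{\tilde u_s}=C_{\tilde u_0+(-s\,\nabla\E/\norm{\nabla\E})}$ relates to $C_{\tilde u_0}$ under the perpendicular perturbation $d$, guaranteeing via the cone inclusion $C_{\tilde u_0+d}\cap B(u_0,r)\subset C_{\tilde u_0}+[1-\delta,1+\delta]A_\gamma(d)$ that there is a competitor $w\in C_{\tilde u_s}$ lying within the deformed cone along a direction making angle at most $\gamma$ with $-\nabla\E(\tilde u_0)$. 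Evaluating $\E$ at this $w$ by Taylor expansion yields $\E(w)-\E(\tilde u_0)\le -s\cos\gamma\,\norm{\nabla\E(\tilde u_0)}+o(s)$, and since $\E(\phi(\tilde u_s))=\max_{C_{\tilde u_s}}\E\ge\E(w)$ gives a lower bound but the maximum is attained so the location $\phi(\tilde u_s)$ itself may differ --- the correct extraction is to use that the \emph{value} $\E(\phi(\tilde u_s))$ equals the max over the slightly rotated cone, which by the sandwiching $[1-\delta,1+\delta]A_\gamma(d)$ cannot exceed the max over $C_{\tilde u_0}$ shifted by the controlled descent, producing $\E(\phi(\tilde u_s))-\E(\tilde u_0)\le -\alpha s\,\norm{\nabla\E(\tilde u_0)}$ with $\alpha$ depending only on $\cos\gamma$ and $\delta$.

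The main obstacle, and where I would spend the most care, is making the $\alpha$ \emph{uniform} over $\tilde u_0\in B(u_0,r_0)\cap\Ran\phi$ and extracting it so that it depends only on $\gamma,\delta$ and not on $s$ or on the base point. The delicate point is that the $o(s)$ remainder in the Taylor expansion must be controlled uniformly in $\tilde u_0$; this is why one passes to a whole ball $B(u_0,r_0)$ and invokes the uniformity built into~\eqref{Ac2} (the single $\gamma,\delta$ valid for all nearby $\tilde u_0$). I would therefore choose $\alpha$ strictly between $0$ and $\tfrac{1}{2}(1-\delta)\cos\gamma$, say, and then shrink $s_0,r_0$ until the uniform remainder estimate absorbs the gap between $\alpha$ and the leading coefficient. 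The geometric bookkeeping relating the angle bound $\gamma$ and the scaling window $[1-\delta,1+\delta]$ to the achievable descent rate is the genuinely technical step; everything else reduces to continuity of $\nabla\E$ and the maximization property of $\phi$.
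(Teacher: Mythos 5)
Your core strategy is in fact the paper's: use that $\nabla\E(\tilde u_0)$ is perpendicular to $\spanned C_{\tilde u_0}$ (note this is \emph{exact}, not ``nearly'' so --- since $\tilde u_0=\phi(\tilde u_0)$ lies in the relative interior of $C_{\tilde u_0}$ and maximizes $\E$ there, the derivative of $\E$ along $\spanned C_{\tilde u_0}$ vanishes; this exactness is what makes $d=-s\nabla\E(\tilde u_0)/\norm{\nabla\E(\tilde u_0)}$ an admissible perturbation in~\eqref{Ac2}), then invoke the inclusion $C_{\tilde u_0+d}\cap B(u_0,r)\subset C_{\tilde u_0}+[1-\delta,1+\delta]A_\gamma(d)$, and finally combine a first-order estimate along directions in $A_\gamma(d_{\tilde u_0})$ with the maximum property $\E(v)\le\E(\tilde u_0)$ for $v\in C_{\tilde u_0}$. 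Your worry about the uniformity of the $o(s)$ remainder is handled in the paper by a mean value theorem with a uniform gradient bound on a small ball (continuity of $\nabla\E$ suffices), which directly yields $\alpha=\tfrac12(1-\delta)\cos\gamma$; that part of your plan is sound, as is your choice of $\alpha$.

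The genuine gap is localization of $\phi(\tilde u_s)$. The inclusion in~\eqref{Ac2} constrains only $C_{\tilde u_s}\cap B(u_0,r)$, and your Taylor/mean-value estimate is valid only for points $v+\sigma d^*$ with $v$ in a small neighborhood of $\tilde u_0$. But $C_{\tilde u_s}$ is an unbounded cone, so your key assertion that ``the max over the slightly rotated cone cannot exceed the max over $C_{\tilde u_0}$ shifted by the controlled descent'' is unjustified as stated: nothing in your argument prevents the maximizer $\phi(\tilde u_s)$ from lying far outside $B(u_0,r)$, where neither the cone inclusion nor any local expansion says anything about~$\E$. This is precisely where the hypothesis that $\phi$ is continuous at $u_0$ must enter (and it is essentially the only place the paper uses it): continuity provides $s_0>0$ and $\varepsilon_3>0$ such that $\phi(\tilde u_0+s\,d_{\tilde u_0})\in B\bigl(u_0,\min\{r,\tfrac13\varepsilon_2\}\bigr)$ for all $\tilde u_0\in B(u_0,\varepsilon_3)\cap\Ran\phi$ and $s\le s_0$; only then can one decompose $\phi(\tilde u_s)=v_s+K_s s\,d^*_s$ with $v_s\in C_{\tilde u_0}$ \emph{close to} $\tilde u_0$, $K_s\in[1-\delta,1+\delta]$, $d^*_s\in A_\gamma(d_{\tilde u_0})$, and apply the mean value estimate to that decomposition. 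In your proposal, continuity of $\phi$ is invoked only for the first bullet, where it is not actually needed (continuity of $\nabla\E$ alone gives $\nabla\E\neq 0$ near $u_0$), so this essential step is missing. A small additional slip: your chain $\norm{\tilde u_s-u_0}\le\norm{\tilde u_0-u_0}+s<r_2+s_0<r_2$ cannot hold since $s_0>0$; you want, e.g., $r_0+s_0<2r_2$ with $B(u_0,2r_2)\subset\A$.
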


\begin{proof}
  Let $u_0 \in \Ran\phi \subset \A$ and
  let us consider $\gamma$, $\delta$
  and $r$ given by the assumption~\eqref{Ac2} for~$u_0$.
  Since $\A$ is open, there exists
  $\varepsilon_1 >0$ such that for any $u\in B(u_0, \varepsilon_1)$ and
  $v\in B(u,\varepsilon_1)$,
  one has $u,v\in \A$, $\nabla\E(u)\ne 0$,
  $\nabla\E(v)\ne 0$ and $u,v\in B(u_0, r)$.

  For any $u\in B(u_0, \varepsilon_1)$, let $d_u := -{\nabla \E
  (u)}/{\norm{\nabla \E(u)}}$.  Then
  \begin{equation*}
    \forall u\in B(u_0, \varepsilon_1),\
    \forall d\in A_\gamma(d_u),\quad
    \inner{\nabla\E(u)}{d} \le -\cos\gamma\, \norm{\nabla\E(u)}.
  \end{equation*}
  Let $\tilde\gamma := \frac{1}{2}\cos\gamma > 0$.
  Taking $\varepsilon_1$ smaller if necessary, we may assume that
  \begin{equation*}
    \forall u, v \in B(u_0, \varepsilon_1),\
    \forall d\in A_\gamma(d_u),\quad
    \inner{\nabla\E(v)}{d} < -\tilde\gamma \norm{\nabla\E(u)}.
  \end{equation*}
  Thus, on one hand, there exists $\varepsilon_2>0$ such that,
  for any $u\in
  B(u_0, \varepsilon_2)$, $v \in B(u,\varepsilon_2)$, $d\in
  A_\gamma(d_u)$ and $\sigma \in \intervaloo{0, \varepsilon_2}$,
  \begin{equation*}
    \inner{\nabla\E(v + \sigma d)}{d}
    < - \tilde\gamma \norm{\nabla\E(u)}.
  \end{equation*}
  For any  $\tilde{u}_0\in B(u_0, \varepsilon_2)\cap
  \Ran \phi$, $v\in C_{\tilde{u}_0}\cap B(\tilde{u}_0,
  \varepsilon_2)$,  $d\in A_\gamma(d_{\tilde{u}_0})$ and
  $\sigma < \varepsilon_2$, the mean value theorem implies there
  exists a $\tilde\sigma \in \intervaloo{0,\sigma}$ such that
  \begin{align}
    \E(v  + \sigma d) - \E(\tilde{u}_0)
    &\le \E(v+ \sigma d) -\E(v)  \label{eq:use max E}\\
    &= \biginner{\nabla \E( v+ \tilde\sigma d)}{\sigma d} \notag\\
    &< -\tilde\gamma \sigma \norm{\nabla \E(\tilde{u}_0)},
    \label{eqlem}
  \end{align}
  where the first inequality results from $v\in C_{\tilde{u}_0}$
  and $\tilde{u}_0 = \phi(\tilde u_0)$ is a global maximum of $\E$
  on~$C_{\tilde{u}_0}$.

  On the other hand, by the continuity of $\phi$ at~$u_0$, there exist
  $s_0 \in \intervaloo{0,r}$
  and $\varepsilon_3 \in \bigintervaloo{0,
    \min\{r, \tfrac{1}{3}\varepsilon_2\}}$ such that,
  for any $\tilde{u}_0\in B(u_0,\varepsilon_3)$
  and $s \in \intervalcc{0,s_0}$, one has
  $\phi(\tilde{u}_0 + s d_{\tilde{u}_0})
  \in B(u_0,\min\{r, \tfrac{1}{3}\varepsilon_2\})$.
  Let $\tilde{u}_s := \tilde{u}_0 + s d_{\tilde{u}_0}$.
  If in addition $\tilde u_0 \in \Ran\phi$, one has
  $d_{\tilde{u}_0} \perp \spanned C_{\tilde{u}_0}$ (because $\tilde{u}_0 =
  \phi(\tilde{u}_0) \in \operatorname{int} C_{\tilde{u}_0}$ is a local
  maximum)
  and therefore one deduces from assumption~\eqref{Ac2} that
  \begin{equation*}
    \phi(\tilde{u}_s)\in C_{\tilde{u}_s} \cap B(u_0,r)
    \subset C_{\tilde{u}_0}
    + \intervalcc{1-\delta,1+\delta} A_{\gamma}(sd_{\tilde{u}_0}).
  \end{equation*}
  Thus, $\phi(\tilde{u}_s) = v_s + K_s s d^*_s$
  for some $v_s\in C_{\tilde{u}_0}$,
  $K_s\in \intervalcc{1-\delta, 1+\delta}$ and $d^*_s\in
  A_{\gamma}(d_{\tilde{u}_0})$. So, possibly taking $s_0$
  smaller, we get that $K_s s < \tfrac{1}{3}\varepsilon_2$ and
  $v_s = \phi(\tilde{u}_s) -
  K_s s d^*_s \in B(\tilde{u}_0,\varepsilon_2)$.
  Using equation~\eqref{eqlem}, we conclude that
  \begin{equation*}
    \E\bigl(\phi(\tilde{u}_s)\bigr) - \E(\tilde{u}_0)
    = \E(v_s + K_s s d^*_s) - \E(\tilde{u}_0)
    \le -\tilde\gamma\, (1 -\delta) s \norm{\nabla\E(\tilde{u}_0)}
  \end{equation*}
  for any $\tilde{u}_0\in B(u_0,\varepsilon_3)\cap \Ran \phi$
  and $s \in \intervaloc{0, s_0}$.
\end{proof}

\begin{Rem}
\label{uniform1}
\begin{itemize}
\item Equation~\eqref{eq:use max E} is the unique place we use  that
  $\phi(u)$ is a global maximum of $\E$ on $C_u$.  This assumption can be
  weakened by only requiring that the neighborhood on which $\phi(u)$
  achieves the maximum of $\E$ is locally uniform w.r.t.\ $u$:
  \begin{equation}
    \label{eq:local-max-phi}
    \forall u_0 \in \Ran\phi,\
    \exists \rho >0,\
    \forall u \in \Ran \phi \cap B(u_0, \rho),\quad
    \E\bigl(\phi(u)\bigr) = \max_{v \in C_u \cap B(u, \rho)} \E(v).
  \end{equation}
  This assumption allows the existence of multiple maximums points
  in~$C_u$.  It was not used in definition~\ref{defpeak} for
  simplicity but also because, in the examples of
  section~\ref{applmpa}, $\phi(u)$ is a maximum on the whole~$C_u$.
\item Let us also note that, if we are just interested in the
  inequality~\eqref{step} at $u_0$ (and not for all $\tilde u_0$ in a
  neighborhood of $u_0$), we only need to require that
  $\phi(u)$ is a local maximum of $\E$ on~$C_u$.
\item A careful reader may notice that we did not really use the fact
  that $C_u$ is a cone pointed at~$0$.  However, if $(C_u)$ was just a
  family of sets satisfying
  \eqref{Ac1}, \eqref{Ac2}, \eqref{eq:local-max-phi} and the fact that
  $\phi(u) \in \operatorname{int} C_u$ in a locally uniform way:
  \begin{equation}
    \label{eqrem}
    \forall u_0 \in \Ran\phi,\ \exists \rho > 0,\
    \forall u \in \Ran \phi \cap B(u_0, \rho),\quad
    B(u, \rho) \cap \spanned C_u \subset C_u \text{,}
  \end{equation}
  then the cone $\hat C_u$,
  defined as the closure of $\{ tv \mid t \ge 0$ and $v \in
  C_u\}$, also satisfies \eqref{Ac1}, \eqref{Ac2}, \eqref{eq:local-max-phi}
  and $\phi(u) \in \operatorname{int} \hat C_u$.  So very little is
  gained by not using cones, especially because they are the natural
  structures encountered in our examples.
\item As a consequence of the above deformation lemma, one can interpret
  $\Ran\phi$ as
  somewhat a natural constraint for $\E$ in the sense of~\cite{noris}.
  More precisely, it implies that
  if $u_0\in \Ran \phi$ is a local minimum of $\E$ on $\Ran
  \phi$ then $u_0$ is a critical point of $\E$ on the whole space~$\H$.
\end{itemize}
\end{Rem}

\subsection{Convergence up to a subsequence}

In this section, we first remark that
it is possible to
construct a sequence of stepsizes $s_n$ such that the energy $\E$ decreases
along the sequence $(u_n)_{n\in\IN}$ generated by
algorithm~\ref{mpa}. In the following,  without loss of
generality, we can assume that $\nabla \E(u_n)\neq 0$ for  any
$n\in\IN$ (otherwise the algorithm finds a critical point
in a finite number of steps).

\begin{Prop}
  \label{stability}
  If   $s_n >0$ verifies inequality~\eqref{step} given in
  Lem\-ma~\ref{deflemma} for any $n\in\IN$,
  then the functional $\E$ decreases along the sequence $(u_n)_{n\in\IN}$.
\end{Prop}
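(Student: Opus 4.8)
The plan is to show that $\E(u_{n+1}) < \E(u_n)$ for every $n\in\IN$, which is exactly the assertion that $\E$ decreases along $(u_n)_{n\in\IN}$. The argument is a direct substitution of the hypothesis into the definition of the iterates, so I expect it to be short; the only preliminary I would check carefully is that the whole orbit stays in $\Ran\phi$, so that inequality~\eqref{step} may legitimately be invoked at each step.

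First I would establish, by induction on~$n$, that $u_n\in\Ran\phi$ for all~$n$. The base case holds because Algorithm~\ref{mpa} starts from $u_0\in\Ran\phi$. For the inductive step, the iterate $u_{n+1}$ is defined as the image under~$\phi$ of the point $u_n - s_n\,\nabla\E(u_n)/\norm{\nabla\E(u_n)}$, hence $u_{n+1}\in\Ran\phi$ by construction. This is the observation that lets us apply the hypothesis at every point of the sequence, since inequality~\eqref{step} is formulated for points of~$\Ran\phi$.

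Next, fixing $n$ and writing $\tilde{u}_0 := u_n$ and $\tilde{u}_s := u_n - s\,\nabla\E(u_n)/\norm{\nabla\E(u_n)}$ in the notation of Lemma~\ref{deflemma}, I would note that $u_{n+1} = \phi(\tilde{u}_{s_n})$. By assumption, $s_n>0$ satisfies inequality~\eqref{step} at $\tilde{u}_0 = u_n$ with $s = s_n$, which reads
\[
  \E(u_{n+1}) - \E(u_n)
  = \E\bigl(\phi(\tilde{u}_{s_n})\bigr) - \E(u_n)
  < -\alpha\, s_n\, \norm{\nabla\E(u_n)} .
\]
Since $\alpha>0$ is the constant furnished by Lemma~\ref{deflemma}, $s_n>0$ by hypothesis, and $\norm{\nabla\E(u_n)}>0$ because we assume $\nabla\E(u_n)\neq 0$ throughout (otherwise the algorithm terminates in finitely many steps), the right-hand side is strictly negative. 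Hence $\E(u_{n+1}) < \E(u_n)$, and as $n$ was arbitrary the sequence $\bigl(\E(u_n)\bigr)_{n\in\IN}$ is strictly decreasing.

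There is thus no genuine obstacle in the proof itself. The two points that deserve a word are the induction guaranteeing that the orbit remains in~$\Ran\phi$ and the standing convention $\nabla\E(u_n)\neq 0$ that keeps the right-hand side from vanishing; and the existence of at least one admissible stepsize $s_n>0$ satisfying~\eqref{step} is already ensured by Lemma~\ref{deflemma}, so the hypothesis of the proposition is non-vacuous.
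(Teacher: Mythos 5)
Your proof is correct and follows essentially the same route as the paper: apply inequality~\eqref{step} with $\tilde{u}_0 = u_n$ and $s = s_n$, and note that the right-hand side $-\alpha s_n \norm{\nabla\E(u_n)}$ is strictly negative under the standing convention $\nabla\E(u_n)\neq 0$. Your explicit induction showing $u_n\in\Ran\phi$ for all $n$ is a point the paper leaves implicit (it follows immediately since each iterate is an image under~$\phi$), but it is the same argument, just with the bookkeeping spelled out.
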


\begin{proof}
  As $\nabla \E(u_n)\neq 0$, $s_n$ is well-defined by Lemma~\ref{deflemma}. By
  construction, we have
  \begin{equation*}
    \E(u_{n+1})-\E(u_n)
    = \E\left(\phi\bigl(u_n-s_n\frac{\nabla \E(u_n)}{\norm{\nabla
          \E(u_n)}}\bigr)\right)-\E(u_n)
    < -\alpha s_n  \norm{\nabla \E(u_n)}<0.
  \end{equation*}
  So, $\E(u_{n+1})<\E(u_n)$.
\end{proof}

As explained in the introduction, we now consider
the sets $S^*(u_0)$ and $S(u_0)$. The set $S^*(u_0)$ is
not empty as soon as $u_0$ is
not a critical point of $\E$ (thanks to the deformation
lemma). Concerning the set $S(u_0)$, it is not-empty once $\E$ is bounded from
below on $\Ran \phi$, an assumption that we will later make (see
Theorem~\ref{convtheorem}).

\begin{Lem}
  \label{convlemma1}
  If
  $u_0\in \Ran \phi$, $\nabla \E(u_0)\neq 0$ and  $\phi$ is continuous at
  $u_0$, then there exists an open
  neighborhood $V$ of $u_0$ and $s^*>0$ such that $S(u)\subset [s^*,+\infty)$
  for any $u\in V\cap \Ran \phi$.
\end{Lem}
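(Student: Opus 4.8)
The plan is to reduce the statement to a uniform lower bound on $\sup S^*(u)$ and then to read that bound directly off the Uniform Deformation Lemma. Indeed, by the very definition $S(u) = S^*(u) \cap \bigintervalco{\frac{1}{2}\sup S^*(u), +\infty}$, any $s \in S(u)$ satisfies $s \ge \frac{1}{2}\sup S^*(u)$. Hence it suffices to produce a radius $r_0 > 0$ and a number $s_0 > 0$ such that $\sup S^*(u) \ge s_0$ for every $u \in B(u_0, r_0) \cap \Ran\phi$; we will then take $V := B(u_0, r_0)$ and $s^* := \frac{1}{2} s_0$.

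To obtain such a uniform lower bound, I would apply Lemma~\ref{deflemma} at $u_0$. Its hypotheses are exactly those assumed here ($u_0 \in \Ran\phi$, $\nabla\E(u_0) \ne 0$, $\phi$ continuous at $u_0$, and \eqref{Ac1}--\eqref{Ac2}), so it supplies $s_0 > 0$ and $r_0 > 0$ with the stated uniform properties. The crucial observation is that, for a fixed $\tilde u_0 \in B(u_0, r_0) \cap \Ran\phi$, the three conclusions of the lemma say precisely that every $s \in \intervaloc{0, s_0}$ fulfils the defining conditions of $S^*(\tilde u_0)$: one has $\nabla\E(\tilde u_0) \ne 0$ (so that $S^*(\tilde u_0)$ is meaningful), one has $\tilde u_s \in \A$, and inequality~\eqref{step} is exactly $\E(\phi(\tilde u_s)) - \E(\tilde u_0) < -\alpha s \norm{\nabla\E(\tilde u_0)}$ with the same constant $\alpha$ used in the definition of $S^*$. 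Therefore $\intervaloc{0, s_0} \subset S^*(\tilde u_0)$, whence $\sup S^*(\tilde u_0) \ge s_0$.

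Combining the two observations yields the claim: for every $\tilde u_0 \in V \cap \Ran\phi$ with $V := B(u_0, r_0)$, any $s \in S(\tilde u_0)$ obeys $s \ge \frac{1}{2}\sup S^*(\tilde u_0) \ge \frac{1}{2} s_0 =: s^*$, so $S(\tilde u_0) \subset \intervalco{s^*, +\infty}$. (If $\sup S^*(\tilde u_0) = +\infty$ the set $S(\tilde u_0)$ is empty and the inclusion holds vacuously, so no case is lost.)

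I do not expect a genuine obstacle here: all of the analytic difficulty — in particular the uniformity of the constants over a whole neighborhood of $u_0$, rather than merely at $u_0$ — has already been absorbed into Lemma~\ref{deflemma}. The only point that must be checked with care is the matching of constants: the $\alpha$ appearing in the definition of $S^*$ is, by construction, the one produced by Lemma~\ref{deflemma}, so the deformation inequality and the membership condition for $S^*$ coincide literally. Once this is noted, the argument is a direct unwinding of the definitions.
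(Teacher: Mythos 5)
Your proof is correct and is essentially identical to the paper's: both invoke the uniform deformation Lemma~\ref{deflemma} at $u_0$ to obtain $s_0, r_0 > 0$ uniform over $B(u_0,r_0)\cap\Ran\phi$, observe that this puts $s_0$ (indeed all of $\intervaloc{0,s_0}$) in $S^*(u)$ for every such $u$, and conclude $S(u)\subset\intervalco{s_0/2,+\infty}$ with $s^* = s_0/2$. The only cosmetic difference is that you spell out the reduction to the lower bound on $\sup S^*(u)$ and the vacuous case $\sup S^*(u)=+\infty$, which the paper leaves implicit.
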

\begin{proof}
  By the uniform deformation Lemma~\ref{deflemma}, there exists $s_0>0$ and
  $r_0>0$ such that, for any $0<s\leq s_0$ and $u\in B(u_0, r_0)\cap
  \Ran\phi$, we have $u_s := u-s
  \frac{\nabla \E(u)}{\norm{\nabla\E(u)}}\in \A$, $\grad\E(u)\neq 0$ and
  \begin{equation}
    \label{step2}
    \E\bigl( \phi(u_s) \bigr)-\E(u)
    < -\alpha s \norm{\nabla \E(u)}.
  \end{equation}
  In particular, for any $u\in B(u_0, r_0)\cap \Ran\phi$, $s_0\in
  S^*(u)$. It follows that $S(u)\subset \intervalco{\frac{s_0}{2},+\infty}$. It
  suffices to take $s^* \le s_0/2$.
\end{proof}

\begin{Rem}
  \label{uniform2}
  To prove Lemma~\ref{convlemma1},  let us remark
  that we could only use
  inequality~\eqref{step2} at $u=u_0$ for $s = s_0$ fixed.
  Indeed, by continuity,  it directly
  implies  that $s_0 \in S(u)$ for $u$ close to $u_0$.  However, to
  obtain Lemma~\ref{convlemma1} for $\tilde S(u)$ (see
  section~\ref{convmpa2}) instead of $S(u)$, the full strength of the
  deformation lemma is needed.
\end{Rem}

\noindent 
From now on, we have to require condition~\eqref{Ac-xi}.
Let us first show it subsumes~\eqref{Ac2}.

\begin{Lem}
  \label{orthog-derivative}%
  Let $(\xi_i)_{i=1}^k$ be the family of vector fields given by
  \eqref{Ac-xi} and assume \eqref{Ac1} holds.  Then
  \begin{equation*}
    \forall u \in \A,\
    \forall d \perp C_u,\quad
    \sum_{i=1}^k  \biginner{u}{\xi_i(u)} \cdot \xi'_i(u)[d]
    = d - \sum_{i=1}^k \biginner{u}{\xi_i'(u)[d]} \, \xi_i(u).
  \end{equation*}
\end{Lem}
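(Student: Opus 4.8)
The plan is to reduce the whole statement to differentiating a single pointwise identity that expresses $u$ in the moving frame $(\xi_i(u))_i$, and then to specialise the direction of differentiation to $d \perp C_u$.

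First I would extract from \eqref{Ac1} the orthogonal decomposition of $u$ along $E$ and $V_u$. Let $P$ denote the orthogonal projector of $\H$ onto the fixed closed subspace $E$. Assumption \eqref{Ac1} gives $u \in C_u \subset E \oplus V_u$, so the $E^\perp$-component $u - Pu$ of $u$ lies in $V_u = \spanned\{\xi_1(u),\dots,\xi_k(u)\}$. Expanding it in the orthonormal frame $(\xi_i(u))_i$, and using that each $\xi_i(u)\in E^\perp$ so that $\inner{Pu}{\xi_i(u)} = 0$, I obtain
\begin{equation}
  \label{eq:framedecomp}
  u = Pu + \sum_{i=1}^k \inner{u}{\xi_i(u)}\, \xi_i(u),
  \qquad u \in \A.
\end{equation}
This is the crux of the argument: cone membership \eqref{Ac1} forces $u$ to have no component in $E^\perp \cap V_u^\perp$, so that its $V_u$-part is fully recovered by the scalars $\inner{u}{\xi_i(u)}$, turning \eqref{Ac1} into a genuine functional identity valid on all of~$\A$.

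Next I would differentiate \eqref{eq:framedecomp} with respect to $u$. Both sides are $\C^1$ in $u$, since the $\xi_i$ are $\C^1$ and $P$ is bounded linear (hence its own derivative); differentiating in an arbitrary direction $v$ and applying the product rule to each scalar $u \mapsto \inner{u}{\xi_i(u)}$ yields
\begin{equation*}
  v = Pv
    + \sum_{i=1}^k \bigl(\inner{v}{\xi_i(u)} + \inner{u}{\xi_i'(u)[v]}\bigr)\xi_i(u)
    + \sum_{i=1}^k \inner{u}{\xi_i(u)}\,\xi_i'(u)[v].
\end{equation*}
Finally I would set $v = d$ with $d \perp C_u$. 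Since $\spanned C_u = E \oplus V_u$, the condition $d \perp C_u$ means exactly $d \perp E$ and $d \perp \xi_i(u)$ for every $i$, that is $Pd = 0$ and $\inner{d}{\xi_i(u)} = 0$. Substituting kills the $Pv$ term and all the $\inner{v}{\xi_i(u)}$ terms, leaving
\begin{equation*}
  d = \sum_{i=1}^k \inner{u}{\xi_i'(u)[d]}\,\xi_i(u)
    + \sum_{i=1}^k \inner{u}{\xi_i(u)}\,\xi_i'(u)[d],
\end{equation*}
and rearranging gives precisely the asserted identity.

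The only genuinely delicate point is \eqref{eq:framedecomp}; after that the computation is a routine differentiation followed by a substitution. I would also note that the argument uses only \eqref{Ac1}, the orthonormality~(i), and the $\C^1$ regularity of the frame — neither the infinitesimal form of orthonormality nor condition~(iv) of \eqref{Ac-xi} is required here.
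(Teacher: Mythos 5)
Your proof is correct and takes essentially the same route as the paper: expand $u$ in the orthonormal frame $(\xi_i(u))_i$ using \eqref{Ac1}, differentiate the resulting identity on $\A$, and observe that the terms $\inner{d}{\xi_i(u)}\,\xi_i(u)$ vanish when $d \perp C_u$. You are in fact slightly more careful than the paper, whose proof asserts $u \in C_u \subset V_u$ and thereby silently drops the $E$-component of $u$; your explicit bookkeeping with the projector $P$ repairs that harmless sloppiness, since $Pd = 0$ for $d \perp C_u$ makes the extra term disappear in the end.
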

\begin{proof}
  For any $u \in \A$, the fact that $u \in C_u \subset V_u$ and that
  $(\xi_i)_{i=1}^k$ is an orthonormal basis of $V_u$ imply $u = \sum
  \inner{u}{\xi_i(u)} \xi_i(u)$.  Differentiating in a direction $d
  \in \H$, yields
  \begin{equation*}
    d = \sum_{i=1}^k \inner{d}{\xi_i(u)} \; \xi_i(u)
    + \sum_{i=1}^k \inner{u}{\xi_i'(u)[d]} \, \xi_i(u)
    + \sum_{i=1}^k \inner{u}{\xi_i(u)} \cdot \xi'_i(u)[d].
  \end{equation*}
  If $d \perp \spanned C_u$, the first term
  vanishes.  This completes the proof.
\end{proof}

\begin{Prop}
  Properties~\eqref{Ac1} and~\eqref{Ac-xi} imply~\eqref{Ac2}.
\end{Prop}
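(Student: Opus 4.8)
The plan is to verify \eqref{Ac2} with \emph{any} fixed constants $\gamma \in \intervaloo{0,\pi/2}$ and $\delta \in \intervaloo{0,1}$, putting the whole force of the argument into the choice of $r$ for a given $u_0 \in \Ran\phi$. I would begin by recording two elementary facts about a point $\tilde u_0 \in \Ran\phi$. First, by \eqref{Ac1} we have $\tilde u_0 \in C_{\tilde u_0}$, so writing $\tilde u_0 = e_0 + \sum_i a_i\, \xi_i(\tilde u_0)$ with $a_i := \inner{\tilde u_0}{\xi_i(\tilde u_0)}$ (using orthonormality of the $\xi_i$) forces $a_i \ge 0$, and \eqref{Ac-xi}(iv) upgrades this to $a_i > 0$; by continuity the $a_i$ stay bounded away from $0$ for $\tilde u_0$ in a small ball about $u_0$. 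Second, a generic element of $C_{\tilde u_0 + d}$ is $w = e + \sum_i t_i\, \xi_i(\tilde u_0 + d)$ with $e \in E$ and $t_i \ge 0$, and since $t_j = \inner{w}{\xi_j(\tilde u_0 + d)}$ while $w$ is required to lie in $B(u_0, r)$ (hence close to $\tilde u_0$), each $t_j$ is within $O(r)$ of $a_j$; in particular $t_j > 0$ once $r$ is small.

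Next I would Taylor expand each unit vector field using that it is $\C^1$, writing $\xi_i(\tilde u_0 + d) = \xi_i(\tilde u_0) + \xi_i'(\tilde u_0)[d] + R_i$, and substitute this into $w$. The terms $e + \sum_i t_i\, \xi_i(\tilde u_0)$ already belong to $C_{\tilde u_0}$, so the whole difficulty is to show that the remaining part $\sum_i t_i\, \xi_i'(\tilde u_0)[d] + \sum_i t_i R_i$ is a small multiple of a direction close to $d$, modulo $C_{\tilde u_0}$. Here the crucial input is Lemma~\ref{orthog-derivative}, applicable because $d \perp C_{\tilde u_0}$: it gives $\sum_i a_i\, \xi_i'(\tilde u_0)[d] = d - \sum_i b_i\, \xi_i(\tilde u_0)$ with $b_i := \inner{\tilde u_0}{\xi_i'(\tilde u_0)[d]}$. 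Replacing $t_i$ by $a_i$ in $\sum_i t_i\, \xi_i'(\tilde u_0)[d]$ (the error being controlled by $\abs{t_i - a_i} = O(r)$ and the bound on $\xi_i'$ from \eqref{Ac-xi}(v)) and absorbing $-\sum_i b_i\, \xi_i(\tilde u_0)$, which lies in $V_{\tilde u_0}$, back into the cone, I would arrive at a decomposition
\[
  w = c_1 + d + \rho, \qquad c_1 := e + \sum_i (t_i - b_i)\, \xi_i(\tilde u_0),
\]
where $c_1 \in C_{\tilde u_0}$ because $t_i - b_i > 0$ for $r$ small (as $t_i \approx a_i > 0$ while $b_i = O(\norm{d})$), and $\norm{\rho} \le \kappa(r)\norm{d}$ with $\kappa(r) \to 0$ as $r \to 0$.

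To finish, set $\lambda := \norm{d + \rho}/\norm{d}$ and $d' := \norm{d}\,(d+\rho)/\norm{d+\rho}$, so that $w = c_1 + \lambda d'$ with $\norm{d'} = \norm{d}$. The estimate on $\rho$ yields $\lambda \in \intervalcc{1 - \kappa(r),\, 1 + \kappa(r)}$ and $\cos\angle(d',d) \ge (1 - \kappa(r))/(1 + \kappa(r))$; choosing $r$ so small that $\kappa(r) \le \delta$ and $(1-\kappa(r))/(1+\kappa(r)) \ge \cos\gamma$ gives $\lambda \in \intervalcc{1-\delta,\,1+\delta}$ and $d' \in A_\gamma(d)$, i.e.\ $w \in C_{\tilde u_0} + \intervalcc{1-\delta,\,1+\delta}\,A_\gamma(d)$, which is exactly the inclusion required by \eqref{Ac2}.

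The main obstacle is not the algebra but securing all these estimates \emph{uniformly} in $\tilde u_0 \in \Ran\phi \cap B(u_0,r)$ and in $d \in B(0,r)$ at the same time. The control of the remainders $R_i$ through the integral form $R_i = \int_0^1 \bigl(\xi_i'(\tilde u_0 + \theta d) - \xi_i'(\tilde u_0)\bigr)[d]\intd\theta$ requires a modulus of continuity for $\xi_i'$ valid on a whole neighborhood of $u_0$; this is where I would lean on the continuity of $\xi_i'$ at $u_0$ together with the boundedness supplied by \eqref{Ac-xi}(v), which also bounds $\norm{\xi_i'(\tilde u_0)}$ and hence both $\abs{b_i}$ and $\norm{\xi_i'(\tilde u_0)[d]}$ by $O(\norm{d})$. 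Once these uniform bounds are in hand, $\kappa(r)$ can be made to vanish with $r$ and the argument closes; note that properties \eqref{Ac-xi}(ii)--(iii) play no role here, only \eqref{Ac-xi}(i),(iv),(v) and the $\C^1$ regularity are used, the finite presentation of $C_u$ entering through the parametrisation $w = e + \sum_i t_i\, \xi_i(\tilde u_0 + d)$.
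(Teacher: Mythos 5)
Your proposal is correct and follows essentially the same route as the paper's own proof: the same parametrisation $w = e + \sum_i t_i\,\xi_i(\tilde u_0 + d)$ with $t_i = \inner{w}{\xi_i(\tilde u_0+d)}$ close to $a_i = \inner{\tilde u_0}{\xi_i(\tilde u_0)} > 0$, the same integral-form Taylor expansion of $\xi_i(\tilde u_0+d)$, the same use of Lemma~\ref{orthog-derivative} to convert $\sum_i a_i\,\xi'_i(\tilde u_0)[d]$ into $d$ modulo a correction absorbed into $C_{\tilde u_0}$, and the same smallness estimates on the two error terms via \eqref{Ac-xi}(v) and the continuity of $\xi'_i$ at $u_0$. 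The only (cosmetic) difference is the last step: the paper fixes $\delta$ and chooses $\gamma$ via the geometric inclusion $B(d,\delta\norm{d}) \subset \intervalcc{1-\delta,1+\delta}A_\gamma(d)$, whereas you verify the norm and angle conditions directly by rescaling $d+\rho$ as $\lambda d'$ --- which amounts to proving that same inclusion inline and even yields the conclusion for arbitrarily prescribed $\gamma$ and $\delta$.
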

\begin{proof}
  Let $\delta \in \intervaloo{0,1}$ (property ~\eqref{Ac2} will be
  satisfied whatever value is chosen).  Simple geometrical
  considerations show that there exists a $\gamma \in \intervaloo{0,
    \pi/2}$ such that
  \begin{equation*}
    B(d, \delta \norm{d}) \subset
    \intervalcc{1-\delta, 1+\delta} A_\gamma(d).
  \end{equation*}
  Let $u_0\in \Ran\phi$.  As $u_0 \in \operatorname{int}
  C_{u_0}$, there exist $\alpha > 0$ such that
  $\inner{u_0}{\xi_i(u_0)} > \alpha$ for all~$i$.
  Using the continuity of $\xi_i$ and $\xi'_i$ at $u_0$, we can choose
  $r$ sufficiently small and a $M > 0$ (depending only on $u_0$) so
  that, for all $u \in B(u_0,r)$ and all $i= 1,\dotsc, k$,
  \begin{align*}
    \inner{u}{\xi_i(u)} > \alpha, \quad
    \norm{\xi_i(u) - \xi_i(\tilde{u}_0)} \le \epsilon,\quad
    \norm{\xi'_i(u)} \le M,  \quad\text{and}\quad
    \norm{\xi'_i(u) - \xi'_i(\tilde{u}_0)} \le \epsilon,
  \end{align*}
  where $\epsilon >0$ is a constant depending only on $\delta$ and
  $u_0$ (to be chosen later).

  Let $\tilde u_0 \in B(u_0, r)$ and $d \in B(0,r)$ such that $d \perp
  C_{\tilde{u}_0}$.   Let $w \in
  C_{\tilde{u}_0 + d} \cap B(u_0, r)$.  One can write $w = e + \sum
  t_i \xi_i(\tilde{u}_0 + d)$ for some $e \in E$ and $t_i \ge
  0$. Let us start by noticing that $t_i =
  \inner{w}{\xi_i(\tilde{u}_0 + d)}$.  Therefore, recalling that
  $\norm{\xi_i} = 1$, one deduces
  \begin{align}
    \bigabs{t_i - \inner{\tilde{u}_0}{\xi_i(\tilde{u}_0)}}
    &\le \bigabs{\biginner{w - \tilde{u}_0}{\xi_i(\tilde{u}_0 + d)}}
    + \bigabs{ \biginner{\tilde{u}_0}{
        \xi_i(\tilde{u}_0 + d) - \xi_i(\tilde{u}_0)}}  \notag\\
    &\le \norm{w - \tilde{u}_0} + \norm{\tilde{u}_0}
    \norm{\xi_i(\tilde{u}_0 + d) - \xi_i(\tilde{u}_0)}  \notag\\
    &\le 2r + (\norm{u_0} + r) \epsilon .
    \label{eq:t-i}
  \end{align}
  Provided that $\epsilon$ and $r$ are chosen small enough, one can
  assume that $2r + (\norm{u_0} + r) \epsilon \le \alpha/3$.
  In particular, this implies $t_i > 2\alpha/3 > 0$.

  Using the integral form of the mean value theorem, we get
  \begin{equation}
    \label{eq:w}
    w = e + \sum_{i=1}^k t_i \xi_i(\tilde{u}_0 +d)
    = e + \sum_{i=1}^k t_i \xi_i(\tilde{u}_0)
    + \int_0^1 \sum_{i=1}^k t_i \, \xi'_i(\tilde{u}_0+sd)[d] \intd s .
  \end{equation}
  The third term can be rewritten as follows:
  \begin{multline*}
    \sum_{i=1}^k \inner{\tilde{u}_0}{\xi_i(\tilde u_0)} \,
    \xi'_i(\tilde{u}_0)[d]
    + \sum_{i=1}^k
    \bigl(t_i - \inner{\tilde{u}_0}{\xi_i(\tilde u_0)} \bigr)\,
    \xi'_i(\tilde{u}_0)[d] \\
    + \int_0^1 \sum_{i=1}^k t_i \,
    \bigl(\xi'_i(\tilde{u}_0+sd)[d] - \xi'_i(\tilde{u}_0)[d]\bigr) \intd s
    =: d_1 + d_2 + d_3 .
  \end{multline*}
  Using Lemma~\ref{orthog-derivative} on $d_1$, one can write
  equation~\eqref{eq:w} as
  \begin{equation*}
    w = e + \sum_{i=1}^k
    \bigl(t_i - \inner{\tilde{u}_0}{\xi_i'(\tilde{u}_0)[d]} \bigr)
    \, \xi_i(\tilde{u}_0)
    + d
    + d_2 + d_3 .
  \end{equation*}
  Since
  \begin{math}
    \bigabs{\inner{\tilde{u}_0}{\xi_i'(\tilde{u}_0)[d]}}
    \le \norm{\tilde{u}_0} \norm{\xi_i'(\tilde{u}_0)} \norm{d}
    \le (\norm{u_0} + r) M r
  \end{math}, %
  we can assume $r$ was chosen small enough so that this is smaller
  that $\alpha/3$.  Recalling that $t_i > 2\alpha/3$, one sees that
  the coefficients of $\xi_i(\tilde{u}_0)$ are positive and therefore
  \begin{math}
    e + \sum
    \bigl(t_i - \inner{\tilde{u}_0}{\xi_i'(\tilde{u}_0)[d]} \bigr)
    \, \xi_i(\tilde{u}_0)
    \in C_{\tilde u_0}
  \end{math}. %

  The proof is complete if we show that~$d + d_2 + d_3 \in B(d,
  \delta \norm{d})$.  Using~\eqref{eq:t-i}, we deduce
  $\abs{t_i} \le
  \bigabs{t_i - \inner{\tilde{u}_0}{\xi_i(\tilde{u}_0)}}
  + \norm{\tilde u_0} \le 2r + (\norm{u_0} + r) (\epsilon + 1)$.
  Thus the following estimates
  \begin{align*}
    \norm{d_2}
    &\le \sum_{i=1}^k
    \bigabs{t_i - \inner{\tilde{u}_0}{\xi_i(\tilde u_0)}}\,
    \norm{\xi'_i(\tilde{u}_0)} \norm{d}
    \le k \bigl(2r + (\norm{u_0} + r) \epsilon\bigr) M \, \norm{d},
    \\
    \norm{d_3}
    &\le \sum_{i=1}^k \abs{t_i}
    \sup_{s \in \intervalcc{0,1}}
    \norm{\xi'_i(\tilde{u}_0+sd) - \xi'_i(\tilde{u}_0)} \norm{d}
    \le k \bigl(2r + (\norm{u_0} + r) (\epsilon + 1)\bigr)
    \epsilon \norm{d},
  \end{align*}
  show that $\norm{d_i} \le \tfrac{1}{2}\delta \norm{d}$, $i=2,3$,
  provided that the constants $\epsilon$ and $r$ were chosen small
  enough.
\end{proof}

Lemma~\ref{convlemma2-xi} is the second key element
to prove the convergence up to a
subsequence.

\begin{Lem}
  \label{convlemma2-xi}
  Let $\phi$ be a peak selection for $(C_u)_{u \in\A}$ which verifies
  conditions~\eqref{Ac1} and \eqref{Ac-xi}.  Let
  $(u_n)_{n\in\IN}$ and $(s_n)_{n\in\IN}$ be given by the generalized
  MPA (algorithm~\ref{mpa}) with $s_n \in S(u_n)$ for all $n$. Let us assume
  that $\phi$ is continuous, $\overline{\Ran\phi} \subset \A$, and
  either
  \begin{subequations}
    \begin{align}
      &\label{eqsup}
      \exists \tau_1,\dots, \tau_k \in \intervaloo{0,+\infty},\
      \dist \Bigl(\Bigl\{\sum_{i=1}^k \tau_i \xi_i(u) \Bigm|
      u\in\Ran\phi \Bigr\}, \partial \A\Bigr) >0,
      \\
      \text{or}\qquad
      &\label{eq:bdd-E}
      \begin{cases}
        \forall (v_n) \subset \Ran\phi,\
        \bigl(\E(v_n)\bigr) \text{ is bounded from above}
        \limplies
        (v_n) \text{ is bounded}
        \\
        \text{and }  \dim E < \infty.
      \end{cases}
    \end{align}
  \end{subequations}
  If $\sum_{n=0}^{+\infty}s_n< +\infty$ then $(u_n)_{n\in\IN}$ converges in $\A$.
\end{Lem}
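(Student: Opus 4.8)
The plan is to show that the hypothesis $\sum_n s_n < +\infty$ forces $(u_n)$ to be a Cauchy sequence, after which convergence in $\A$ comes almost for free: since every $u_n$ lies in $\Ran\phi$ (we have $u_0\in\Ran\phi$ and $u_{n+1}=\phi(\cdots)\in\Ran\phi$) and $\overline{\Ran\phi}\subset\A$, any limit of $(u_n)$ automatically lies in the open set $\A$. We may also assume $\nabla\E(u_n)\ne0$ for all $n$, otherwise the algorithm terminates in finitely many steps. Thus the whole difficulty is concentrated in one quantitative estimate, namely that the stepsize controls the increments,
\begin{equation*}
  \norm{u_{n+1} - u_n} \le C\, s_n
\end{equation*}
for a constant $C$ independent of $n$; granting this, $\sum_n\norm{u_{n+1}-u_n}\le C\sum_n s_n<+\infty$ and $(u_n)$ is Cauchy.

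Before establishing the increment estimate I would first secure the two ingredients that make $C$ uniform: boundedness of $(u_n)$ and a uniform distance from $\partial\A$. By Proposition~\ref{stability} the energy decreases, so $\E(u_n)\le\E(u_0)$ for all $n$. Under hypothesis~\eqref{eq:bdd-E} this bounds $(u_n)$ at once, while $\dim E<\infty$ lets me treat the component $P_E u_n$ on a finite-dimensional footing; under hypothesis~\eqref{eqsup} the vectors $\sum_i\tau_i\xi_i(u_n)$ stay at a fixed positive distance from $\partial\A$, which together with the cone structure in~\eqref{Ac-xi} confines $(u_n)$ to a bounded region bounded away from $\partial\A$. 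On such a region property~(v) of~\eqref{Ac-xi} furnishes a uniform bound $M$ on the $\xi_i$ and the $\xi_i'$, and $\nabla\E$ is bounded there as well; these are the constants that feed into $C$.

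The core is the increment estimate, and here I would exploit the finite presentation~\eqref{Ac-xi} rather than the mere geometric condition~\eqref{Ac2}. Write $w_n := u_n - s_n\nabla\E(u_n)/\norm{\nabla\E(u_n)} = u_n + s_n\tilde d_n$ with $\tilde d_n\perp C_{u_n}$, which holds because $u_n=\phi(u_n)\in\operatorname{int}C_{u_n}$ is an interior maximum, so $\nabla\E(u_n)\perp\spanned C_{u_n}$. Since $u_{n+1}=\phi(w_n)\in\operatorname{int}C_{w_n}$, property~(iii) gives $\xi_i(u_{n+1})=\xi_i(w_n)$, and the integral mean value theorem with $\norm{\xi_i'}\le M$ yields $\norm{\xi_i(w_n)-\xi_i(u_n)}\le M s_n$, so the moving frame tilts by only $O(s_n)$. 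Expanding $u_{n+1}=e_{n+1}+\sum_i b_i\,\xi_i(w_n)$ and $u_n=e_n+\sum_i a_i\,\xi_i(u_n)$ in these two orthonormal frames, and using the decomposition coming from the proof that~\eqref{Ac-xi} implies~\eqref{Ac2}, namely $u_{n+1}=v_n+\rho_n$ with $v_n\in C_{u_n}$ and $\norm{\rho_n-s_n\tilde d_n}\le\delta s_n$, I would bound separately the frame-tilt term $\sum_i b_i(\xi_i(w_n)-\xi_i(u_n))=O(s_n)$, the coordinate change $\abs{b_i-a_i}=O(s_n)$, and the change $\norm{e_{n+1}-e_n}=O(s_n)$ of the $E$-component. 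The nonvanishing condition~(iv), $\inner{u}{\xi_i(u)}\ne0$, and Lemma~\ref{orthog-derivative} are what should make the coordinates and the $E$-component depend on the tilt in a Lipschitz way, so that these three contributions sum to $\norm{u_{n+1}-u_n}\le C s_n$.

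I expect the last point to be the genuine obstacle. Since $\E$ is only assumed of class $\C^1$, the peak selection $\phi$ need not be differentiable, so one cannot simply differentiate the maximizer; the estimate $\abs{b_i-a_i}+\norm{e_{n+1}-e_n}=O(s_n)$ must instead be extracted from the first-order optimality relations $\nabla\E(u_n)\perp C_{u_n}$ and $\nabla\E(u_{n+1})\perp C_{u_{n+1}}$ read in the slowly varying frame $(\xi_i)$, thereby controlling the motion of the maximizer of $\E$ on a cone that tilts by $O(s_n)$ using only the presentation data. Once the increment estimate is in hand the proof closes at once: $(u_n)$ is Cauchy, its limit $u^\ast$ lies in $\overline{\Ran\phi}\subset\A$, and hence $(u_n)$ converges in $\A$.
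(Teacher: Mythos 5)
Your proof hinges entirely on the increment estimate $\norm{u_{n+1}-u_n}\le C s_n$, and that estimate is precisely what cannot be obtained under the stated hypotheses---as you yourself concede in your final paragraph. Since $\E$ is only $\C^1$ and $\phi$ is only continuous, nothing controls how far the constrained maximizer travels when the cone tilts by $O(s_n)$: the first-order relations $\nabla\E(u_n)\perp C_{u_n}$ and $\nabla\E(u_{n+1})\perp C_{u_{n+1}}$, read in the slowly varying frame, carry no quantitative modulus unless the maximum is second-order nondegenerate, which is not assumed. A maximum that is very flat along the cone allows the coefficients $t_i$ and the $E$-component of the maximizer to jump by amounts unrelated to $s_n$ even when the frame $\bigl(\xi_i(u_n)\bigr)$ rotates by only $K s_n$; so the comparison $\abs{b_i-a_i}+\norm{e_{n+1}-e_n}=O(s_n)$ cannot be extracted from the presentation data, and $(u_n)$ need not be Cauchy at the rate $\sum s_n$. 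A separate, smaller flaw: under~\eqref{eqsup} alone, your preliminary claim that $(u_n)$ is bounded is unjustified, since \eqref{eqsup} keeps $\sum_i\tau_i\xi_i(u_n)$ away from $\partial\A$ but says nothing about the size of the coordinates of $u_n$ or of its $E$-component.

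The paper's proof is designed around exactly this obstruction: it never estimates $\norm{u_{n+1}-u_n}$ by $s_n$. The only quantity shown to have increments $O(s_n)$ is the frame itself: by \eqref{Ac-xi}~(iii), $\xi_i(u_{n+1})=\xi_i\bigl(\phi(w_n)\bigr)=\xi_i(w_n)$ (with $w_n$ your notation for $u_n+s_nd_n$), so the bound on $\xi_i'$ gives $\norm{\xi_i(u_{n+1})-\xi_i(u_n)}\le K s_n$, and each sequence $\bigl(\xi_i(u_n)\bigr)_{n}$ is Cauchy. Convergence of $(u_n)$ is then recovered \emph{indirectly}. Under~\eqref{eqsup}, one sets $\tilde v_n:=\sum_i\tau_i\xi_i(u_n)$; this converges to a point of $\A$, and since $\tilde v_n\in\operatorname{int} C_{u_n}$ implies $\phi(\tilde v_n)=\phi(u_n)=u_n$, the continuity of $\phi$ alone yields convergence of $(u_n)$---no Cauchy property of $(u_n)$ and no boundedness are ever needed. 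Under~\eqref{eq:bdd-E}, the decreasing energy gives boundedness of $(u_n)$, finite dimensionality of $E$ and the bound $\abs{t_{i,n}}=\abs{\inner{u_n}{\xi_i(u_n)}}\le\norm{u_n}$ give convergent subsequences, and the convergence of the whole frame sequence together with Definition~\ref{defpeak}~\eqref{phi-uniqueness} forces all subsequential limits to coincide. To rescue your outline you would have to add a nondegeneracy hypothesis on the maximum of $\E$ over $C_u$ (or Lipschitz regularity of $\phi$), which is exactly what the lemma is formulated to avoid.
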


\begin{proof}
  Let $k$ be given by the assumption~\eqref{Ac-xi}.
  For $i=1,\dots,k$, set
  $v_{i,n}:=\xi_i(u_n)$,
  and $d_n := - \frac{\nabla \E(u_n)}{\norm{\nabla\E(u_n)}}$.
  Let $r$ be given by assumption~\eqref{Ac-xi}~(v) and $K_i$ be a
  bound for $\xi'_i$.  Denote $K := \max_{i=1,\dotsc, k} K_i$.

  By assumption~\eqref{Ac-xi} and as
  $\phi(u_n + s_n d_n)\in \operatorname{int} C_{u_n + s_n d_n}$, we have
  \begin{equation*}
    v_{i,n+1} = \xi_i\bigl(\phi(u_n+s_nd_n)\bigr)
    = \xi_i(u_n + s_n d_n)
  \end{equation*}
  for any $n\in\IN$.
  Let $n^*$ be large enough so that $s_n < r$.
  Thus, for all $n \ge n^*$,
  \begin{equation}
    \label{eqProjector}
    \norm{v_{i,n+1}-v_{i,n}}
    \le K \norm{s_nd_{n}}
    = K s_n.
  \end{equation}
  Since $\sum s_n < + \infty$, it follows that
  for any $i = 1,\dotsc,k$,
  $(v_{i,n})_{n\in\IN}$ is a Cauchy sequence and therefore converges
  to, say, $v_{i,\infty}$.

  Let us assume~\eqref{eqsup} holds.
  Consider
  $\tilde{v}_n := \sum_{i=1}^k \tau_i v_{i,n}
  = \sum_{i=1}^k \tau_i \xi_i(u_n)$.  It converges and
  its limits belongs to $\A$.
  Since $\phi(\tilde{v}_n)=\phi(u_n)=u_n$ and
  $\phi$ is continuous, the sequence $(u_n)_{n\in\IN}$ converges.
  Its limit lies in $\overline{\Ran\phi}$ and thus in $\A$.

  If on the other hand \eqref{eq:bdd-E} holds, the fact that the
  sequence $(\E(u_n))$ is decreasing implies that $(u_n)$ is bounded.
  Let $(u'_n)$ be a subsequence of $(u_n)$.  Since $u'_n \in
  C_{u'_n}$, one can write $u'_n = e'_n + \sum_{i=1}^k t'_{i,n}
  \xi_i(u'_n)$ for some $e'_n\in E$ and $t'_{i,n} \in \intervaloo{0,+\infty}$.
  As $(u'_n)$ is bounded, so are $(e'_n)$ and $\abs{t'_{i,n}} =
  \abs{\inner{u'_n}{\xi_i(u'_n)}} \le \norm{u'_n}$.
  So, up to subsequences, $(e'_n)_{n\in\IN}$ and
  $(t'_{i,n})_{n\in\IN}$ converge to, say,
  $e'_\infty$ and $t'_{i,\infty}$.
  Thus $u'_n \to u'_\infty := e'_\infty + \sum t'_{i,\infty}
  v_{i,\infty}$.  Thanks to $\overline{\Ran\phi} \subset \A$,
  $u'_\infty \in \A$.  But then the continuity of $\xi_i$ and $\phi$ imply
  \begin{equation}
    \label{eq:lim-sum s_n}
    v_{i,\infty} = \xi_i(u'_\infty)
    \qquad\text{and}\qquad
    u'_n = \phi(u'_n) \to \phi(u'_\infty).
  \end{equation}
  If the same reasoning is performed with another subsequence
  $(u''_n)$, \eqref{eq:lim-sum s_n} implies that $\xi_i(u'_\infty) =
  \xi_i(u''_\infty)$ and therefore, in view
  of definition~\ref{defpeak}~\eqref{phi-uniqueness}, $\phi(u'_\infty) =
  \phi(u''_\infty)$.  As the limit does not depend on the subsequence,
  the whole sequence $(u_n)$ converges in~$\A$.
\end{proof}

\begin{Rem}
  If we wanted to
  seek sign-changing solutions using the cones $C_u := \IR^+
  u^+ \oplus \IR^+ u^-$ (as explained in the Introduction), then we
  would not be able to remove the projection factors in the above
  computation of
  $v_{i,n+1}$. This sheds some light on the difficulty of proving
  the convergence of the Modified Mountain Pass
  Algorithm~\cite{Neuberger97}.
\end{Rem}

\begin{Thm}
  \label{convtheorem}
  Assume $\phi : \A \to \A$ is a continuous peak selection s.t.\
  $\overline{\Ran\phi} \subset \A$ and
  the cones $(C_u)_{u \in \A}$ verify the
  conditions~\eqref{Ac1}, \eqref{Ac-xi} and~\eqref{eqsup} or~\eqref{eq:bdd-E}.
  Suppose moreover that $\E \in \C^1(\H; \IR)$ satisfies the
  Palais-Smale condition in $\Ran \phi$ and that $\inf_{u\in \Ran \phi}
  \E(u)>-\infty$.  Then the sequence
  $(u_n)_{n\in \IN}$ given by the generalized mountain pass
  algorithm~\ref{mpa} possesses a  subsequence
  converging to a critical point of $\E$ in $\Ran\phi$.
  In addition, all limit points  of
  $(u_n)_{n\in\IN}$ are critical points of~$\E$.
\end{Thm}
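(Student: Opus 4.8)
The plan is to combine the energy-decrease built into the admissible stepsizes with the two convergence lemmas already proved. First I would record that the whole sequence lies in $\Ran\phi$: indeed $u_0\in\Ran\phi$ and each $u_{n+1}=\phi(\cdots)$ is a value of $\phi$, hence belongs to $\Ran\phi$. By Proposition~\ref{stability} the sequence $\bigl(\E(u_n)\bigr)$ is strictly decreasing, and since $\inf_{u\in\Ran\phi}\E(u)>-\infty$ it is bounded below, therefore convergent. Consequently $\E(u_n)-\E(u_{n+1})\to 0$, and the telescoping sum $\sum_n\bigl(\E(u_n)-\E(u_{n+1})\bigr)=\E(u_0)-\lim_n\E(u_n)$ is finite. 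Because $s_n\in S(u_n)\subset S^*(u_n)$ forces $\E(u_n)-\E(u_{n+1})>\alpha s_n\norm{\nabla\E(u_n)}$, I obtain the key summability
\[
  \sum_{n} s_n\,\norm{\nabla\E(u_n)} < +\infty .
\]

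The heart of the argument, and the step I expect to be the main obstacle, is to show that $\liminf_n\norm{\nabla\E(u_n)}=0$. I would argue by contradiction: if $\norm{\nabla\E(u_n)}\ge c>0$ for all large $n$, then the summability above gives $\sum_n s_n<+\infty$. Lemma~\ref{convlemma2-xi} then applies, since its hypotheses ($\phi$ continuous, $\overline{\Ran\phi}\subset\A$, \eqref{Ac1}, \eqref{Ac-xi} and \eqref{eqsup} or \eqref{eq:bdd-E}) are exactly those assumed here, so the entire sequence $(u_n)$ converges to some $u_\infty\in\A$. By continuity of $\phi$ one has $u_\infty=\phi(u_\infty)\in\Ran\phi$, and by continuity of $\nabla\E$ one has $\norm{\nabla\E(u_\infty)}\ge c>0$. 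Applying Lemma~\ref{convlemma1} at the non-critical point $u_\infty$ furnishes a neighborhood $V$ and an $s^*>0$ with $S(u)\subset\intervalco{s^*,+\infty}$ for every $u\in V\cap\Ran\phi$. Since $u_n\to u_\infty$, one has $s_n\ge s^*$ for all large $n$, contradicting $\sum_n s_n<+\infty$. Hence $\liminf_n\norm{\nabla\E(u_n)}=0$.

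With this in hand the first conclusion follows quickly. I would choose a subsequence along which $\norm{\nabla\E(u_{n_j})}\to 0$; since $\bigl(\E(u_{n_j})\bigr)$ is bounded and $u_{n_j}\in\Ran\phi$, the Palais--Smale condition in $\Ran\phi$ yields a further subsequence converging to some $u_*$. Continuity of $\nabla\E$ gives $\nabla\E(u_*)=0$, and continuity of $\phi$ together with $\overline{\Ran\phi}\subset\A$ gives $u_*=\phi(u_*)\in\Ran\phi$; thus $u_*$ is a critical point of $\E$ lying in $\Ran\phi$.

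Finally, for the assertion that every limit point is critical, let $u_*$ be any limit point, say $u_{n_j}\to u_*$; as above $u_*\in\Ran\phi$. If $\nabla\E(u_*)\ne 0$, Lemma~\ref{convlemma1} provides a neighborhood $V$ of $u_*$ and $s^*>0$ with $S(u)\subset\intervalco{s^*,+\infty}$ on $V\cap\Ran\phi$; for $j$ large this gives $s_{n_j}\ge s^*$ and hence $\E(u_{n_j})-\E(u_{n_j+1})>\alpha s^*\norm{\nabla\E(u_{n_j})}$, whose right-hand side tends to $\alpha s^*\norm{\nabla\E(u_*)}>0$, contradicting $\E(u_n)-\E(u_{n+1})\to 0$. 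Therefore $\nabla\E(u_*)=0$, which completes the proof.
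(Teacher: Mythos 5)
Your proof is correct and follows essentially the same route as the paper: the contradiction argument combining summability of the stepsizes with Lemma~\ref{convlemma2-xi} and Lemma~\ref{convlemma1}, then the Palais--Smale condition for the convergent subsequence, and Lemma~\ref{convlemma1} again for the second assertion. The only (harmless) differences are that you extract the summability $\sum_n s_n\norm{\nabla\E(u_n)}<+\infty$ before invoking the contradiction hypothesis, and you explicitly verify that the limit critical point lies in $\Ran\phi$, a point the paper leaves implicit.
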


\begin{proof}
  Let us start by showing that $\bigl(\nabla\E(u_n)\bigr)_{n\in\IN}$ converges
  to zero up to a subsequence. If not, we could assume there exist
  $\delta >0$ and $n_0\in\IN$ such that, for any $n\geq n_0$,
  $\norm{\nabla \E(u_n)}>\delta$. Then, for any $n\geq n_0$,
  the deformation lemma~\ref{deflemma} implies
  \begin{equation*}
    \E(u_{n+1})-\E(u_n) \leq -\alpha s_n\delta.
  \end{equation*}
  Thus, summing up,
  \begin{equation*}
    \lim_{n\to +\infty} \E(u_n)-\E(u_{n_0})
    = \sum_{n=n_0}^{+\infty} \E(u_{n+1})-\E(u_n)
    \le -\delta \alpha \sum_{n=n_0}^{+\infty}s_n.
  \end{equation*}
  As the left-hand side is a real number ($\E$ is bounded from
  below on $\Ran \phi$ and decreasing along $(u_n)_{n\in\IN}$), we have
  $\sum_{n=0}^{+\infty}s_n < +\infty$. So, by Lemma~\ref{convlemma2-xi},
  $u_n\to u^*\in \A$
  and $\norm{\nabla \E(u^*)}\ge \delta$.
  By continuity of $\phi$ at $u^* \in \A$,
  we obtain $\phi(u^*)=u^*$ and, so,
  $u^*\in \Ran \phi$. By Lemma~\ref{convlemma1}, there exists a
  neighborhood $V$ of $u^*$ and $s^*>0$ such that
  $S(u)\subset \intervalco{s^*,+\infty}$ for any $u\in V$.
  Consequently, there exists $n_0$
  such that, for any $n\geq n_0$, $s_n \geq s^*$ whence
  $\sum_{n=0}^{+\infty}s_n$ does not converge, which is a
  contradiction.

  In conclusion, there exists a subsequence $(u_{n_k})_{k\in\IN}$ of
  $(u_n)_{n\in\IN}$ s.t.\ $\norm{\nabla \E(u_{n_k})} \to 0$
  when $k\to +\infty$. As $\E$ satisfies the Palais-Smale
  condition, $(u_{n_k})_{k\in\IN}$ possesses a subsequence converging
  to a critical point of $\E$.

  Concerning the second statement of the theorem, the argument is very
  similar. Let $(u_{n_k})_{k\in\IN}$ be a convergent subsequence and
  assume on the contrary that $u := \lim_{k\to \infty}u_{n_k}$ is not a
  critical point of $\E$. In that case, on one hand, there exists
  $\delta >0$ and $k_1\in\IN$ such that, for any $k\geq k_1$,
  $\norm{\grad \E(u_{n_k})}\geq \delta$.  By Lemma~\ref{deflemma}, we
  have
  \begin{equation*}
    \forall k\geq k_1,\quad
    \E(u_{n_{k+1}})-\E(u_{n_k}) \leq -\alpha\delta s_{n_k}.
  \end{equation*}
  On the other hand, as $u\in \Ran \phi$, we have by Lemma~\ref{convlemma1} that
  \begin{equation*}
    \exists\ s^*>0,\ \exists k_2\in\IN,\ \forall k\geq k_2,\ s_n\geq s^*.
  \end{equation*}
  So, for large $k$, $\E(u_{n_{k+1}})-\E(u_{n_k})\leq
  -\frac{\alpha}{2}\delta s^*$, which is a contradiction because
  $\bigl(\E (u_n)\bigr)_{n\in\IN}$ is a convergent sequence.
\end{proof}

\begin{Rem}
 By previous remarks~\ref{uniform1} and~\ref{uniform2}, we conclude
that we could get the convergence up to a subsequence using the
equation~\eqref{step} only at $u_0$. Thus, the uniform form of
Lemma~\ref{deflemma} is not required (and we could
consider that $\phi(u)$ is a local maximum of $\phi$ on
$C_u$ instead of a global maximum). Nevertheless,  we have kept the
uniform setting along the paper as it will be required in
Section~\ref{convmpa2}.
\end{Rem}

\noindent
The following special case of~\eqref{Ac-xi} is important for the
applications.
\begin{equation}
  \tag{$AC_{4}$}\label{Ac4}
  \left.\begin{minipage}{0.83\linewidth}
      There exist a closed subspace $E\subset \H$ (possibly
      infinite dimensional) and
      linear continuous projectors $P_i :\H\to E^{\perp}$,
      $i=1,\dots,k$, for some $k\in\IN$,  such that
      \begin{itemize}
      \item $\forall\ u\in \H$, $P_i(u)\perp P_j(u)$ whenever $i\ne j$;
      \item $E \oplus \sum_{i=1}^k \Ran P_i = \H$.
      \end{itemize}
      For all $u \in \H$, set $C_u := E
      \oplus \bigl\{ \sum  t_i P_i(u) \bigm| t_i\ge 0 \text{ for all }
      i\bigr\}$.
    \end{minipage}
  \ \right\}
\end{equation}

Let us now sketch the proof that~\eqref{Ac4} implies both~\eqref{Ac1}
and~\eqref{Ac-xi}.
Consider
$\A := \{ u \in \H \mid P_i(u) \ne 0 \text{ for all } i \}$ and
$\xi_i(u) := \frac{P_i(u)}{\norm{P_i(u)}}$.
Clearly $\xi_1, \dotsc, \xi_k$ are $\C^1$ functions on $\A$.
Moreover $e + \sum t_i P_i(u) \in \operatorname{int} C_u$ if and only if
all $t_i > 0$.  Since $u = P_E(u) + \sum_{i=1}^k P_i(u)$ where $P_E$
denotes the orthogonal projection on $E$, one has $u \in
\operatorname{int} C_u$.
Given the definitions of $\A$ and $\xi_i$, points (i), (iii) and~(iv)
of~\eqref{Ac-xi} are straightforward.
A simple computation shows that
$\xi'_i(u)\bigl[\sum t_j P_j(u)\bigr]$ is a multiple of $P_i(u)$
whence~(ii) follows.
Finally, as $\norm{\xi'_i(u)} = O(1/\norm{P_i(u)})$, (v) will hold
provided $\norm{P_i(u)}$ is bounded away from $0$ when $u \in \Ran\phi$.
Note that this latter condition also ensures that
$\overline{\Ran\phi} \subset \A$.
Remark that these cones satisfy property \eqref{eqsup} with $\tau_1 =
\cdots = \tau_k = 1$ because $\dist(\sum \xi_i(u), \partial\A)
= \min_j \norm{P_j(\sum \xi_i(u))} = 1$.

Thus, as a corollary of Theorem~\ref{convtheorem}, we get the
following proposition.
It can be thought as an abstract version of the convergence
results in~\cite{sym1,zhou1,zhou2}.

\begin{Prop}
  \label{conv-P_i}
  Let us consider $\phi : \A \to \A$ a continuous
  peak selection with the cones $(C_u)_{u \in \A}$ being given by
  condition~\eqref{Ac4} and $\A := \{u\in \H \mid
  P_i(u)\ne 0 \text{ for all } i=1,\dots,k\}$.
  Assume that
  $\inf_{u\in \Ran \phi} \norm{P_i (u)}>0$ for all
  $i=1,\dots,k$, that $\E \in \C^1(\H; \IR)$ satisfies the
  Palais-Smale condition in $\Ran \phi$ and that $\inf_{u\in \Ran \phi}
  \E(u)>-\infty$.   Then the sequence
  $(u_n)_{n\in \IN}$ given by the generalized mountain pass
  algorithm~\ref{mpa} possesses a  subsequence
  converging to a critical point of $\E$ in $\Ran\phi$.
  In addition, all limit points  of
  $(u_n)_{n\in\IN}$ are critical points of~$\E$.
\end{Prop}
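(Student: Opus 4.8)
The plan is to obtain this proposition as a direct corollary of Theorem~\ref{convtheorem}. The Palais--Smale condition on $\Ran\phi$ and the bound $\inf_{u\in\Ran\phi}\E(u)>-\infty$ are assumed verbatim, so the only task is to check that the cones furnished by~\eqref{Ac4}, under the hypothesis $\inf_{u\in\Ran\phi}\norm{P_i(u)}>0$, satisfy the remaining geometric hypotheses of the theorem: that $\A$ is open, that $\overline{\Ran\phi}\subset\A$, and that $(C_u)_{u\in\A}$ verifies~\eqref{Ac1}, \eqref{Ac-xi} and~\eqref{eqsup}. First I would note that $\A=\bigcap_{i=1}^k P_i^{-1}(\H\setminus\{0\})$ is open, since each $P_i$ is linear and continuous, and I would set $\xi_i(u):=P_i(u)/\norm{P_i(u)}$, which is of class $\C^1$ on~$\A$.

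The discussion preceding the statement already carries out most of the verification, and I would simply invoke it: the identity $u=P_E(u)+\sum_i P_i(u)$ gives $u\in\operatorname{int}C_u$, hence~\eqref{Ac1}; the orthogonality $P_i(u)\perp P_j(u)$ yields the orthonormality~\eqref{Ac-xi}(i); conditions~(iii) and~(iv) follow at once from the definitions of $\A$ and $\xi_i$; and the computation showing that $\xi_i'(u)\bigl[\sum_j t_j P_j(u)\bigr]$ is a multiple of $P_i(u)\in V_u$ gives the invariance~\eqref{Ac-xi}(ii). Likewise, \eqref{eqsup} holds with $\tau_1=\dots=\tau_k=1$, because $\dist\bigl(\sum_i\xi_i(u),\partial\A\bigr)=\min_j\norm{P_j(\sum_i\xi_i(u))}=1$.

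The one place where the extra hypothesis $m_i:=\inf_{u\in\Ran\phi}\norm{P_i(u)}>0$ is genuinely needed is in establishing, simultaneously, $\overline{\Ran\phi}\subset\A$ and condition~\eqref{Ac-xi}(v). For the inclusion, given $u\in\overline{\Ran\phi}$ I would take $u_n\in\Ran\phi$ with $u_n\to u$; continuity of $P_i$ gives $\norm{P_i(u)}=\lim_n\norm{P_i(u_n)}\ge m_i>0$, so $u\in\A$. For~\eqref{Ac-xi}(v), I would use that $\norm{\xi_i'(u)}=O(1/\norm{P_i(u)})$, so that it suffices to keep $\norm{P_i(u)}$ bounded away from $0$ on a neighborhood of $\Ran\phi$. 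Choosing $r<m_i/(2\norm{P_i})$, any $u$ with $\dist(u,\Ran\phi)<r$ admits $v\in\Ran\phi$ with $\norm{u-v}<r$, whence $\norm{P_i(u)}\ge\norm{P_i(v)}-\norm{P_i}\,\norm{u-v}\ge m_i-\norm{P_i}\,r>m_i/2$; thus $\xi_i'$ is bounded on $\{u\mid\dist(u,\Ran\phi)<r\}\cap\A$. Having checked all the hypotheses, I would then apply Theorem~\ref{convtheorem} to conclude both the existence of a subsequence converging to a critical point in $\Ran\phi$ and that every limit point of $(u_n)_{n\in\IN}$ is a critical point of~$\E$.

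I expect the substantive obstacle to be precisely this propagation of the uniform lower bound on $\norm{P_i}$ from $\Ran\phi$ to a full neighborhood, which is what secures~\eqref{Ac-xi}(v); the rest either reduces to the elementary bookkeeping already sketched before the statement or is inherited unchanged from the theorem.
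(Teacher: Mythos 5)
Your proposal is correct and takes essentially the same route as the paper: it verifies that \eqref{Ac4} together with the hypothesis $\inf_{u\in\Ran\phi}\norm{P_i(u)}>0$ yields \eqref{Ac1}, \eqref{Ac-xi}, \eqref{eqsup} (with $\tau_1=\dots=\tau_k=1$) and $\overline{\Ran\phi}\subset\A$, and then applies Theorem~\ref{convtheorem}. Your explicit estimate propagating the lower bound on $\norm{P_i(u)}$ to the neighborhood $\{u \mid \dist(u,\Ran\phi)<r\}\cap\A$ simply fills in a detail that the paper states in one line (``(v) will hold provided $\norm{P_i(u)}$ is bounded away from $0$ when $u\in\Ran\phi$'').
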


In Theorem~\ref{convtheorem}, the Palais-Smale condition is required. For
the particular  case of  $\H = H^1(\IR^N)$,  this
condition does not generally hold as mass may be lost at
infinity. Fortunately,
$H^1(\IR^N)$ respects the following compactness condition (see
for example the
 paper~\cite{lieb} for a proof): for any
bounded sequence $(u_n)_{n\in\IN}\subset H^{1}(\IR^N)$
staying away from zero, there exists
$(x_n)_{n\in\IN}\subset \IZ^N$ such that $\bigl(u_n(\cdot +
x_n)\bigr)$ weakly converges
up to a subsequence to a non-zero function.  This is
enough to get the convergence up to a subsequence.

\begin{Prop}
  \label{convinrn}
  Assume the hypotheses of Theorem~\ref{convtheorem} hold, except for
  the Palais-Smale condition.
  Let $\H := H^1(\IR^N)$ and $(u_n)_{n\in \IN}$ be the sequence given
  by the Mountain Pass Algorithm~\ref{mpa}.
  If, for any $u\in \H$ and
  $x\in \IZ^N$, $\E\bigl(u(\cdot +x)\bigr) = \E(u)$
  and if $ \H \to \H : u \mapsto \nabla\E(u)$ is continuous for the
  weak topology on $\H$,
  then there exists a sequence $(x_n)_{n\in\IN}\subset
  \IZ^N$ such that $\bigl(u_n(\cdot + x_n)\bigr)_{n\in\IN}$ weakly converges up
  to a subsequence to a nontrivial critical point of $\E$.
 \end{Prop}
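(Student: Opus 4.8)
The plan is to follow the structure of the proof of Theorem~\ref{convtheorem}, replacing the step where the Palais--Smale condition is invoked by the translation-compactness property of $H^1(\IR^N)$ stated just above the proposition. First I would establish, exactly as in the first paragraph of the proof of Theorem~\ref{convtheorem}, that a subsequence of $\bigl(\nabla\E(u_n)\bigr)$ tends to zero. This argument is purely abstract: it uses only the deformation Lemma~\ref{deflemma}, the fact that $\E$ is bounded below and decreasing along $(u_n)$, Lemma~\ref{convlemma2-xi} (to pass from $\sum s_n<+\infty$ to convergence of $(u_n)$) and Lemma~\ref{convlemma1} (to force $s_n\ge s^*$ eventually, contradicting $\sum s_n<+\infty$). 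None of these invokes Palais--Smale, so they carry over verbatim and yield a subsequence, still denoted $(u_n)$, with $\norm{\nabla\E(u_n)}\to 0$.

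Next I would argue that this subsequence is bounded and bounded away from zero. Boundedness comes from either \eqref{eqsup} or \eqref{eq:bdd-E} exactly as in Lemma~\ref{convlemma2-xi}: the energy is decreasing, hence bounded above on $\Ran\phi$, which forces $(u_n)$ to be bounded. To see that the sequence stays away from $0$, note that $0\notin\Ran\phi$ and, more quantitatively, that $u_n\in\Ran\phi$ together with the structure of the cones keeps $\norm{u_n}$ bounded below; indeed $u_n=P_E(u_n)+\sum_i t_{i,n}\xi_i(u_n)$ and the assumptions guaranteeing $\overline{\Ran\phi}\subset\A$ prevent $u_n\to 0$. With $(u_n)$ bounded and away from zero, the compactness property supplies translations $(x_n)\subset\IZ^N$ such that $\tilde u_n:=u_n(\cdot+x_n)$ converges weakly, up to a subsequence, to some $\tilde u\neq 0$.

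I would then exploit the translation invariance hypothesis $\E\bigl(u(\cdot+x)\bigr)=\E(u)$ for $x\in\IZ^N$. Differentiating this identity shows that $\nabla\E$ commutes with integer translations, so $\norm{\nabla\E(\tilde u_n)}=\norm{\nabla\E(u_n)}\to 0$. Because $u\mapsto\nabla\E(u)$ is assumed continuous for the weak topology, the weak convergence $\tilde u_n\rightharpoonup\tilde u$ gives $\nabla\E(\tilde u_n)\rightharpoonup\nabla\E(\tilde u)$; since the left-hand side converges strongly to $0$, uniqueness of weak limits yields $\nabla\E(\tilde u)=0$. Thus $\tilde u$ is a nontrivial critical point of $\E$, which is the assertion.

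The main obstacle is the step showing that the limit $\tilde u$ is \emph{nontrivial}: the compactness statement only produces a nonzero weak limit for sequences that are bounded \emph{and} stay away from zero, so the delicate point is verifying $\inf_n\norm{u_n}>0$. This is where the precise hypotheses on the cones (either \eqref{eqsup} or \eqref{eq:bdd-E}, together with $\overline{\Ran\phi}\subset\A$) must be used to rule out $u_n\to 0$; once boundedness away from zero is secured, the weak-continuity of $\nabla\E$ makes the passage to the limit immediate.
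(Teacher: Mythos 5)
Your proposal is correct and follows essentially the same route as the paper's own (very brief) proof sketch: boundedness and non-vanishing of $(u_n)$, the translation-compactness property of $H^1(\IR^N)$ to extract a nonzero weak limit of $u_n(\cdot+x_n)$, and translation equivariance together with weak continuity of $\nabla\E$ to conclude that the limit is a critical point. In fact you are more explicit than the paper on the key point that the first part of the proof of Theorem~\ref{convtheorem} (which never invokes Palais--Smale) already supplies a subsequence with $\norm{\nabla\E(u_{n_k})}\to 0$, a step the paper leaves implicit.
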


\begin{proof}
  We will only briefly sketch the proof.
  As $(u_n)_{n\in\IN}$ is bounded in $H^1(\IR^N)$ and stays away from
  $0$, the compactness condition recalled above implies
  that there exists a sequence $(x_n)_{n\in\IN}\subset \IZ^N$ such
  that $u(\cdot + x_n)$ weakly converges, up to a subsequence, to
  $u^*\neq 0$. Intuitively, the translations ``bring back'' some mass
  that $u_n$ may loose at infinity.

  Using the translation invariance of
  $\E$, the corresponding equivariance of $\nabla\E$ and
  the weak continuity of $\nabla\E$,
  we conclude that $u^*$ is a critical
  point of $\E$.
\end{proof}

\subsection{Convergence of the whole sequence}
\label{convmpa2}

In this section, we refine the stepsize used previously to get the
convergence of the whole sequence generated by
algorithm~\ref{mpa}. We require that the stepsize $s_n \in
\tilde{S}(u_0) := \tilde{S}^{*}(u_0)\cap \left(\frac{1}{2}\sup
  \tilde{S}^{*}(u_0), +\infty \right)$ where
\begin{equation*}
  \begin{split}
    \tilde{S}^{*}(u_0)
    := \Bigl\{ s_0>0  \Bigm|  \forall\ s \in \intervaloc{0, s_0},\
    & u_s := u_0 -
    s\frac{\nabla\E(u_0)}{\norm{\nabla\E(u_0)}} \in \A
    \text{ and }\\
    &\E\bigl(\phi(u_s)\bigr) - \E(u_0)
    < -\alpha s\norm{\nabla \E(u_0)}   \Bigr\}  .
  \end{split}
\end{equation*}
Using the
deformation lemma~\ref{deflemma}, we get that $\tilde S(u_n) \ne
\emptyset$ as long as $u_n$ is not a critical point.
Moreover, working as previously, we get
results~\ref{convlemma1}, \ref{convlemma2-xi} and \ref{convtheorem} for
this new choice of stepsizes. Let
us remark that, this time,  we really need that
inequality~\eqref{step} is valid in a neighborhood of $u_0$ to get
Lemma~\ref{convlemma1}.
This new stepsize will allow us  to
control the energy for any $0<s\leq s_0$.
Under a ``localization'' assumption, we now prove that
the whole sequence $(u_n)_{n\in\IN}$ given by the
mountain pass algorithm~\ref{mpa}
converges to a nontrivial critical point of $\E$.

\begin{Thm}
  \label{convthm}
  Assume that $u$ is the unique critical point of $\E$ in the ball $B(u,
  \delta)$ for some $\delta >0$.
  Under the same assumptions as those of Theorem~\ref{convtheorem}, if there
  exists $n^*\in \IN$ such that $\E(u_{n^*})<a:= \inf_{v\in
    \partial B(u,\delta)\cap \Ran \phi}\E(v)$ and $u_{n^*}\in B(u,\delta)$
  then the  sequence $(u_n)_{n\in\IN}$ produced by
  algorithm~\ref{mpa} with stepsizes $s_n \in \tilde{S}(u_n)$
  converges to $u$.
\end{Thm}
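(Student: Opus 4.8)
The plan is to first show that the refined stepsize forces the sequence to be trapped in $B(u,\delta)$ once it has dropped below the barrier $a$, then to identify the only possible limit as $u$, and finally to bootstrap this trapping to every smaller scale. For the trapping, note that $s_n\in\tilde S(u_n)\subset\tilde S^{*}(u_n)$ gives, for \emph{every} $s\in\intervaloc{0,s_n}$, the strict inequality $\E\bigl(\phi\bigl(u_n-s\tfrac{\nabla\E(u_n)}{\norm{\nabla\E(u_n)}}\bigr)\bigr)<\E(u_n)$. Hence the map $\gamma_n:\intervalcc{0,s_n}\to\Ran\phi$ defined by $\gamma_n(s):=\phi\bigl(u_n-s\,\nabla\E(u_n)/\norm{\nabla\E(u_n)}\bigr)$ for $s>0$ and $\gamma_n(0):=u_n$ is a continuous path in $\Ran\phi$ along which $\E\le\E(u_n)$. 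Since $\E(u_{n+1})<\E(u_n)$ by Proposition~\ref{stability}, concatenating $\gamma_{n^*},\gamma_{n^*+1},\dots$ produces a continuous path in $\Ran\phi$ starting at $u_{n^*}\in B(u,\delta)$ on which $\E\le\E(u_{n^*})<a$. As every point of $\partial B(u,\delta)\cap\Ran\phi$ carries energy $\ge a$, this path never meets $\partial B(u,\delta)$; by induction $u_n\in B(u,\delta)$ and $\E(u_n)<a$ for all $n\ge n^{*}$. This is exactly where the strengthened stepsize $\tilde S$, which controls the whole descent path rather than only its endpoint, is indispensable.

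Next I would pin down the limit. The hypotheses of Theorem~\ref{convtheorem} hold (and remain valid for the choice $\tilde S$), so a subsequence converges, through the Palais--Smale condition, to a critical point $u^{*}$, and every limit point is critical. Being a limit of points of $\Ran\phi$ trapped in $B(u,\delta)$ with $\E<a$, such a $u^{*}$ lies in $\Ran\phi\cap B(u,\delta)$ (its energy $\ell:=\lim_n\E(u_n)<a$ excludes $\partial B(u,\delta)$), so $u^{*}=u$ by uniqueness; thus $u$ is a limit point and $\ell=\E(u)$. A short argument then shows that $u$ is the \emph{unique} minimizer of $\E$ on $\overline{B(u,\delta)}\cap\Ran\phi$, with value $\ell$: if some $w_0$ in this set had $\E(w_0)<\ell$ (necessarily $w_0\in B(u,\delta)$, since the boundary carries energy $\ge a>\ell$), then running the algorithm from $w_0$ would, by the trapping principle and Theorem~\ref{convtheorem}, yield a critical point equal to $u$ but of energy $\le\E(w_0)<\ell=\E(u)$, a contradiction; and any interior minimizer is a critical point of $\E$ by the ``natural constraint'' property of Remark~\ref{uniform1}, hence equals $u$.

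The crux is to upgrade this to a strict separation on \emph{small} spheres: for every $\epsilon\in\intervaloo{0,\delta}$ one needs $a_\epsilon:=\inf\{\E(v)\mid v\in\partial B(u,\epsilon)\cap\Ran\phi\}>\ell$. I would argue by contradiction: a sequence on $\partial B(u,\epsilon)\cap\Ran\phi$ with $\E\to\ell$ is a minimizing sequence for $\E$ on $\overline{B(u,\delta)}\cap\Ran\phi$; applying Ekeland's variational principle on this complete set and converting the resulting almost-minimizers into a Palais--Smale sequence (using Lemma~\ref{deflemma} to bound the attainable descent from below by $\alpha\norm{\nabla\E}$ along $\Ran\phi$) would, by the Palais--Smale condition, produce a minimizer at distance $\epsilon$ from $u$, contradicting uniqueness. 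This compactness step, turning ``almost minimizers far from $u$'' into a genuine critical point, is the main obstacle; the delicate point is that $\phi$ is only assumed continuous, so the passage from Ekeland's slope condition to $\norm{\nabla\E}\to0$ must be carried out along the $\C^1$ directions of the constraint $\Ran\phi$ furnished by the fields $\xi_i$ rather than along $\phi$ itself.

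Finally, with $a_\epsilon>\ell$ in hand, I would re-run the trapping at scale $\epsilon$. Since $\E(u_n)$ decreases to $\ell<a_\epsilon$ and a subsequence converges to $u\in B(u,\epsilon)$, there is an index $m$ with $u_m\in B(u,\epsilon)$ and $\E(u_m)<a_\epsilon$; the trapping principle, now with barrier $a_\epsilon$ on $\partial B(u,\epsilon)\cap\Ran\phi$, forces $u_n\in B(u,\epsilon)$ for all $n\ge m$. As $\epsilon$ is arbitrary, $u_n\to u$, which is the claim.
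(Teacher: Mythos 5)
Your first two steps are exactly the paper's proof. The trapping argument is the same: the paper uses the fact that $s_m\in\tilde S(u_m)$ controls the energy of $\phi\bigl(u_m - s\,\nabla\E(u_m)/\norm{\nabla\E(u_m)}\bigr)$ for \emph{all} $0<s\le s_m$, so that a first exit from $B(u,\delta)$ would, by continuity of $\phi$ and the intermediate value theorem, produce a point of $\partial B(u,\delta)\cap\Ran\phi$ with energy at most $\E(u_{n^*})<a$, a contradiction. The paper then invokes Theorem~\ref{convtheorem}: every accumulation point of the trapped sequence is a critical point, it cannot lie on $\partial B(u,\delta)$ (same energy barrier), hence it equals $u$; and from ``$u$ is the unique accumulation point'' the paper concludes $u_n\to u$ and stops there. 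You, instead, refuse to make that last inference and try to prove the strict separation $a_\epsilon>\ell$ on every small sphere so as to re-run the trapping at scale $\epsilon$. Your instinct that something extra is needed is sound (in an infinite-dimensional $\H$ a bounded sequence with a unique accumulation point need not converge, and nothing gives precompactness of the whole sequence here), and your step 4 (re-trapping, given $a_\epsilon>\ell$) is correct.

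The problem is that the separation step is precisely the one you do not carry out, and it is not a routine verification: it is a genuine gap. The Ekeland route founders on a concrete obstruction. To turn the slope condition $\E(w)\ge\E(w_j)-\sigma_j\norm{w-w_j}$, valid for $w\in\overline{B(u,\delta)}\cap\Ran\phi$, into $\norm{\nabla\E(w_j)}\to0$, you must test it with $w=\phi\bigl(w_j - s\,\nabla\E(w_j)/\norm{\nabla\E(w_j)}\bigr)$: Lemma~\ref{deflemma} gives the energy decrease $-\alpha s\norm{\nabla\E(w_j)}$, but you then need the displacement bound $\norm{\phi\bigl(w_j - s\,\nabla\E(w_j)/\norm{\nabla\E(w_j)}\bigr)-w_j}\le Cs$ to cancel the $\norm{w-w_j}$ factor. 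No hypothesis of the paper provides this: $\phi$ is only continuous, $\Ran\phi$ carries no manifold structure in this abstract setting, and assumption~\eqref{Ac-xi} bounds $\norm{\xi_i(w_j+d)-\xi_i(w_j)}$ linearly in $\norm{d}$, not the displacement of $\phi$ itself. (This is exactly why Lemma~\ref{convlemma2-xi} is organized around Cauchy sequences of the $\xi_i(u_n)$, invoking the continuity of $\phi$ only once, at the limit, rather than estimating $\norm{u_{n+1}-u_n}$ by $s_n$.) So the ``$\C^1$ directions furnished by the $\xi_i$'' that you appeal to do not rescue the argument, and the crux paragraph of your proposal remains an unproven claim on which everything after it depends.
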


\begin{proof}
  For any $m> n^*$, we claim that $u_m \in B(u, \delta)$. If not,
  as $u_{n^*}\in B(u,\delta)$,
  there exists $m\geq n^*$ such that $u_m \in B(u, \delta)$ and
  $u_{m+1}= \phi\bigl(u_m -s_m\frac{\grad \E(u_m)}{
    \norm{\grad \E(u_m)}} \bigr) \notin
  B(u, \delta)$, with $s_m\in \tilde{S}(u_m)$.
  By continuity, there exists $0<s\leq s_m$ such that
  $\phi\bigl(u_m -s\frac{\grad \E(u_m)}
  {\norm{\grad \E(u_m)}} \bigr)\in \partial B(u, \delta)\cap \Ran \phi$.
  This is a contradiction
  because, by the definition of $s_m$ and as $\E$ is
  decreasing along $(u_n)_{n\in\IN}$,  we have
  $a\leq \E\bigl(\phi(u_m -s\frac{\grad \E(u_m)}{\norm{\grad
      \E(u_m)}} )\bigr)\leq \E(u_m)\leq  \E(u_{n^*})<a$.

  As $u$ is the unique critical point in $B(u,\delta)$, by
  Theorem~\ref{convtheorem}, $u$ is the unique accumulation point of
  $(u_n)_{n\in\IN}$. So, $u_n$ converges to $u$.
\end{proof}

\section{Applications}
\label{applmpa}

\subsection{Application to Indefinite Problems}

For problem~\eqref{eqSzulkin},
the energy functional $\E$ given by~\eqref{functional-indefinite}
is defined on $\H:=
H^1_0(\Omega)$. Let us denote the decomposition $\H= \H^{(-)} \oplus \H^{(+)}$
corresponding to the spectral decomposition of
$-\Delta +V$ with respect to the positive and negative part of the
spectrum.  For any $u$, we let $u^{(-)}\in \H^{(-)}$ and $u^{(+)}\in
\H^{(+)}$ be the unique elements
such that $u = u^{(-)}+u^{(+)}$.
Let us remark that the case $\H^{(-)} = \{0\}$ corresponds the
traditional mountain pass algorithm with a positive definite linear operator.

We choose the following peak selection.
Let $\A := \H \setminus \H^-$
and, for any $u\in \A$, let $C_u$ be the cone
$C_u := \H^{(-)} \oplus \IR^+ u = \H^{(-)} \oplus \IR^+ u^{(+)}$.
The peak selection $\phi$ for $(C_u)$ is the map
\begin{equation*}
  \phi: \A\to \A : u\mapsto \phi(u)
\end{equation*}
such that, for all $u \in \A$,
$\phi(u)$ maximizes $\E$ on $C_u$.   To prove that $\phi$ is
continuous, we refer to the original paper~\cite{szulkin}.
Is is easy to check that these cones verify
\eqref{Ac4}.  Indeed it suffices to consider $E=
\H^{(-)}$, $k=1$ and $P_1 : \H\to E^\perp$, the orthogonal
projection on~$E^\perp$.

To apply Proposition~\ref{conv-P_i},  we need to verify the
following assumptions  on $\E$: on a bounded domain $\Omega$,
\begin{enumerate}
\item it is standard to show that $\E\in\C^1$;
\item $\E$ verifies the Palais-Smale condition on $\Ran \phi$
  (see~\cite{szulkin});
\item $\inf_{u\in \Ran \phi}\E(u)>-\infty$: actually $\E$ is bounded from
  below by $0$ on $\Ran \phi$,
  see~\cite{szulkin};
\item $0$ does not belong to $\overline{\Ran P_1 \circ \phi}$:
  it comes from the
  fact that $0$ is a strict local minimum of $\E$ on $E^{\perp}=\H^{(+)}$
  (see~\cite{szulkin}).
\end{enumerate}
In conclusion, Proposition~\ref{conv-P_i} applies and gives
the convergence up to a
subsequence of the sequence $(u_n)$ generated by generalized
mountain pass algorithm~\ref{mpa} for this indefinite
problem provided that the domain $\Omega$ is bounded.

Let us now sketch what happens about the convergence up to a
subsequence when $\Omega = \IR^N$. As $(\E(u_n))_{n\in\IN}$ is
decreasing (see~\ref{stability}) and is bounded away from zero, we
have that $(u_n)_{n\in\IN}$ is  bounded and stays away from zero in
$H^1(\IR^N)$ (see~\cite{szulkin}).  On the other hand, $V$ is assumed
to be $1$-periodic, thus $\E\bigl(u(\cdot + x)\bigr) = \E(u)$
for any $u \in \H$ and $x \in \IZ^N$.  It is not difficult to check
that $\nabla\E$ is weakly continuous.  Thus, Theorem~\ref{convinrn}
asserts that, if $(u_n)$ is the sequence generated by the MPA,
there exists a sequence of translations $(x_n) \subset \IZ^N$ such that
$(u_n(\cdot + x_n))_{n\in\IN}$
weakly converges, up to a subsequence, to a nontrivial critical point
$u^*$ of $\E$.
Moreover, if $\E(u_n)
\to \inf_{u\in\Ran \phi}\E(u)$, then it can be proved
that the above convergence is strong.  The idea is that,
if it does not converge strongly, some mass is lost at
infinity.  At the limit, this mass will take away a quantity of
energy greater or equal to
$\inf_{u \in \Ran\phi} \E(u) > 0$, a contradiction.

\medskip

\paragraph{Numerical experiments}
Let us start by giving some details on the computation of various
objects intervening in the MPA.
Functions in $\H$ will be approximated using
$P^1$-finite elements on a Delaunay triangulation of $\Omega$
generated by Triangle~\cite{Triangle}.
The matrix of the quadratic
form $(u_1, u_2) \mapsto \int_{\Omega} \nabla u_1 \nabla u_2$ is
readily evaluated on the finite elements basis.
For $(u_1, u_2) \mapsto \int_{\Omega} V(x) u_1 u_2 \intd x$
and the various integrals involving $u$ to a power,  a quadratic
integration formula on each triangle is used.
The gradient $g := \nabla\E(v)$ is computed in the usual way:
the function $g \in \H$ is the solution of the linear system of equations
$\forall \varphi \in \H$, $(g | \varphi)_{\H} = \mathrm{d}
\E(v)[\varphi]$.
In practice,  the peak selection  $\phi$ must be evaluated with great
accuracy to obtain satisfying
results.  For this, we use a
limited-memory quasi-Newton code for bound-constrained
optimization~\cite{L-BFGS-B}.
The program stops when the gradient
of the energy functional at the approximation has a norm less than
$10^{-4}$.

As an illustration, we consider $\Omega = \intervaloo{0,1}^2$,
$V \in\IR$ constant and $p=4$.  Let us remark that $\H^{(-)}$
is then formed by eigenfunctions of $-\Delta +V$ with negative eigenvalues.
In dimension~$2$, the eigenvalues $\lambda_i$ of $-\Delta$ on the
square $\intervaloo{0,1}^2$ with zero Dirichlet boundary conditions
are given
by $\pi^2 (n^2 + m^2)$ with $n,m = 1,2,\dots$\@ The
related eigenfunctions are given by $\sin (n\pi x )\sin(m\pi y)$.
We get $\lambda_1 = 2\pi^2 \approx 19.76$,
$\lambda_2 = \lambda_3 = 5\pi^2 \approx 49.48$ (a double
eigenvalue),
$\lambda_4 = 8\pi^2 \approx 78.95$,
$\lambda_5 = \lambda_6 = 10\pi^2 \approx 98.69$,...

Figure~\ref{carrelam} depicts four
non-zero solutions  approximated by the algorithm~\ref{mpa} for four
different values of $V$.  The algorithm was always started from
$u_0(x,y) := xy(x-1)(y-1)$.
The graphs on the left-hand side are
given for the values $V = 0$ ($\dim \H^{(-)} = 0$) and
$-\lambda_2 < V = -21 < -\lambda_1$ ($\dim \H^{(-)}=1$). The
graphs on the right-hand side are given for $-\lambda_4 < V = -50
< -\lambda_3$ ($\dim\H^{(-)} = 3$) and $-\lambda_5 < V = -80
< -\lambda_4 $ ($\dim \H^{(-)} =4$). In Table~\ref{tablelam}, we
present some characteristics of the solutions.

\begin{figure}[h]
  \null\hfill
  \begin{tikzpicture}
    \node at (0,0) {\includegraphics[width=0.4\linewidth]{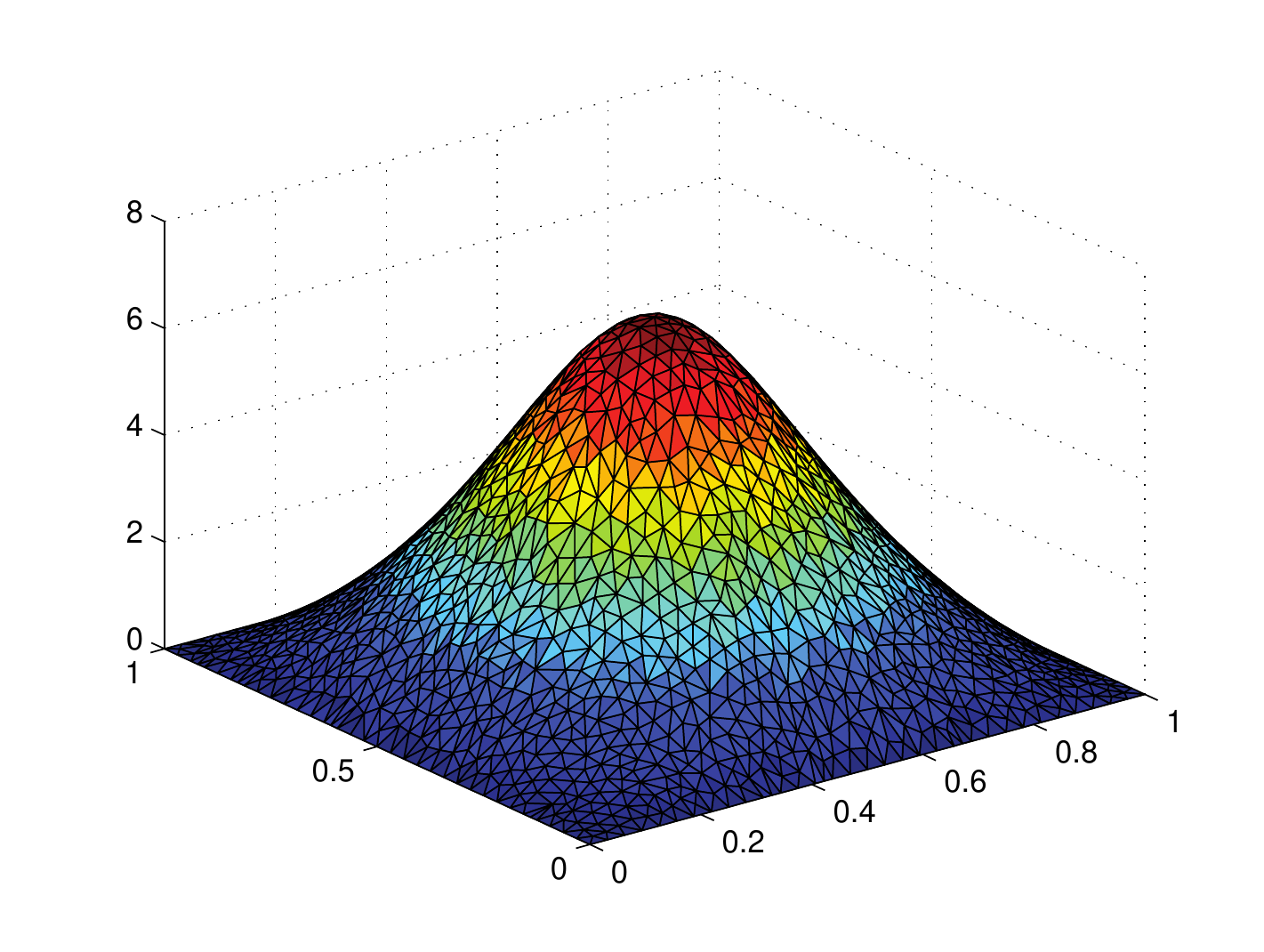}};
    \node[fill=white] at (-10mm,18mm) {$V=0$};
  \end{tikzpicture}
  \hfill
  \begin{tikzpicture}
    \node at (0,0) {\includegraphics[width=0.4\linewidth]{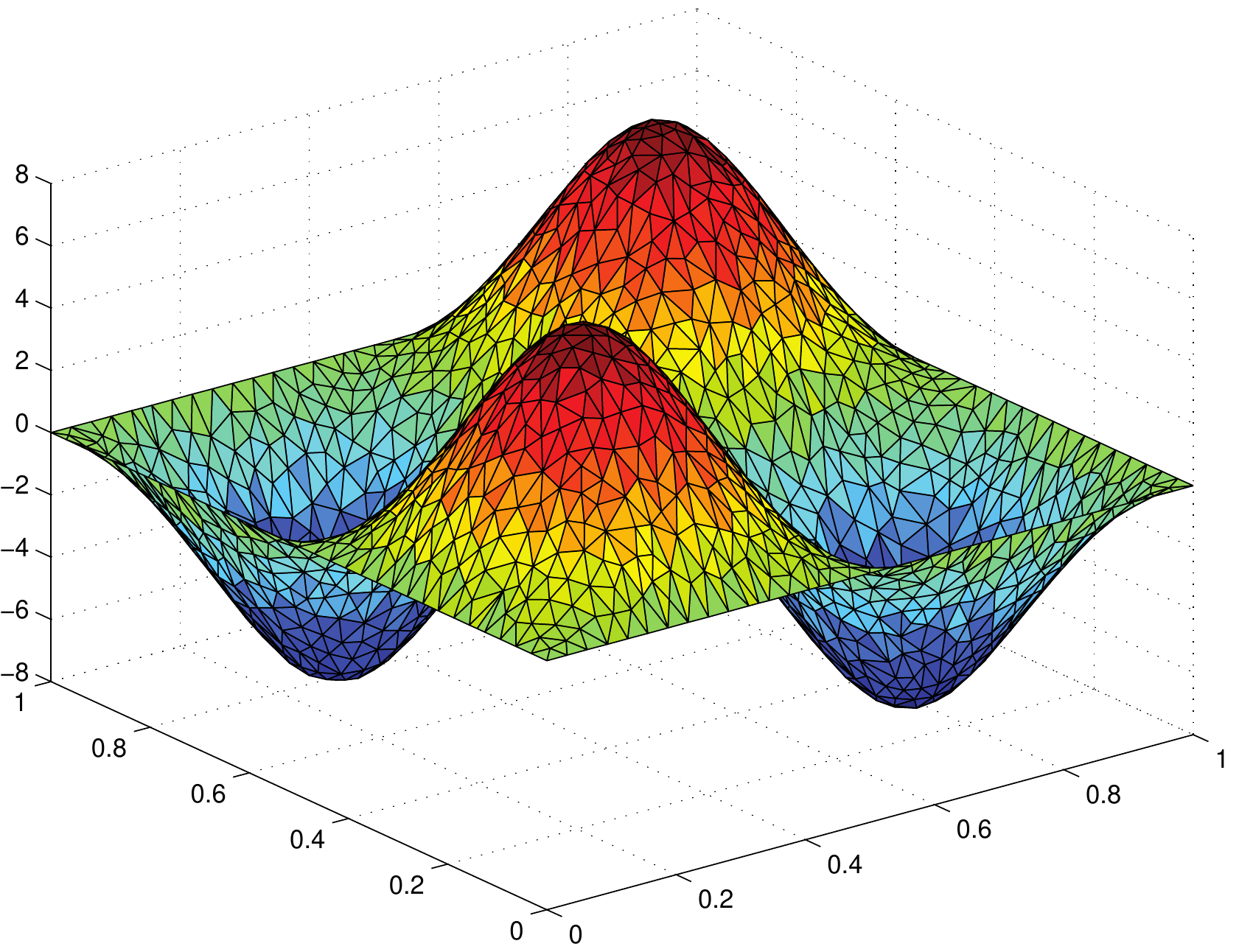}};
    \node[fill=white] at (-10mm,18mm) {$V=-50$};
  \end{tikzpicture}
  \hfill\null\\
  \null\hfill
  \begin{tikzpicture}
    \node at (0,0) {\includegraphics[width=0.4\linewidth]{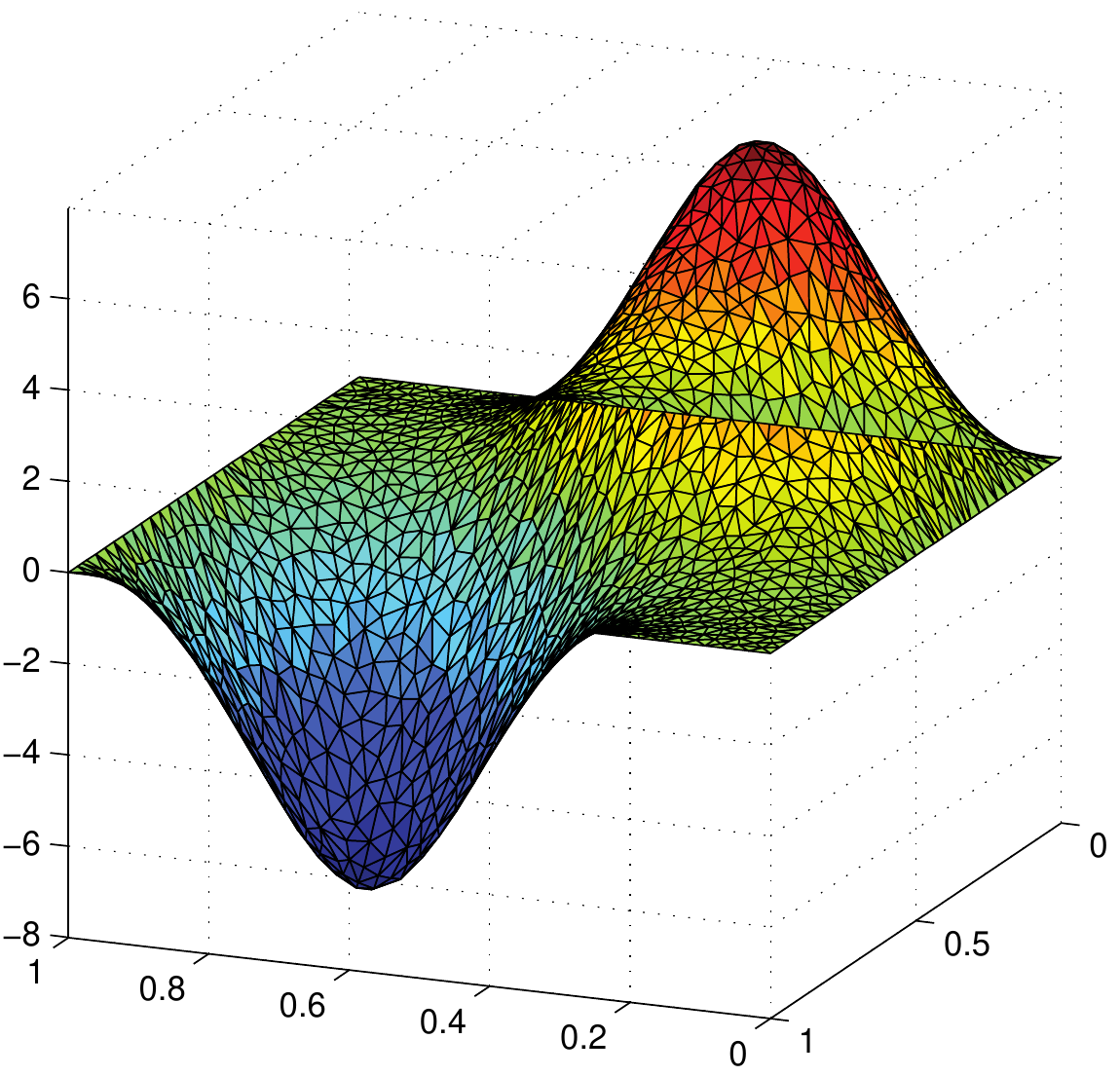}};
    \node[fill=white] at (-10mm,18mm) {$V=-21$};
  \end{tikzpicture}
  \hfill
  \begin{tikzpicture}
    \node at (0,0) {\includegraphics[width=0.4\linewidth]{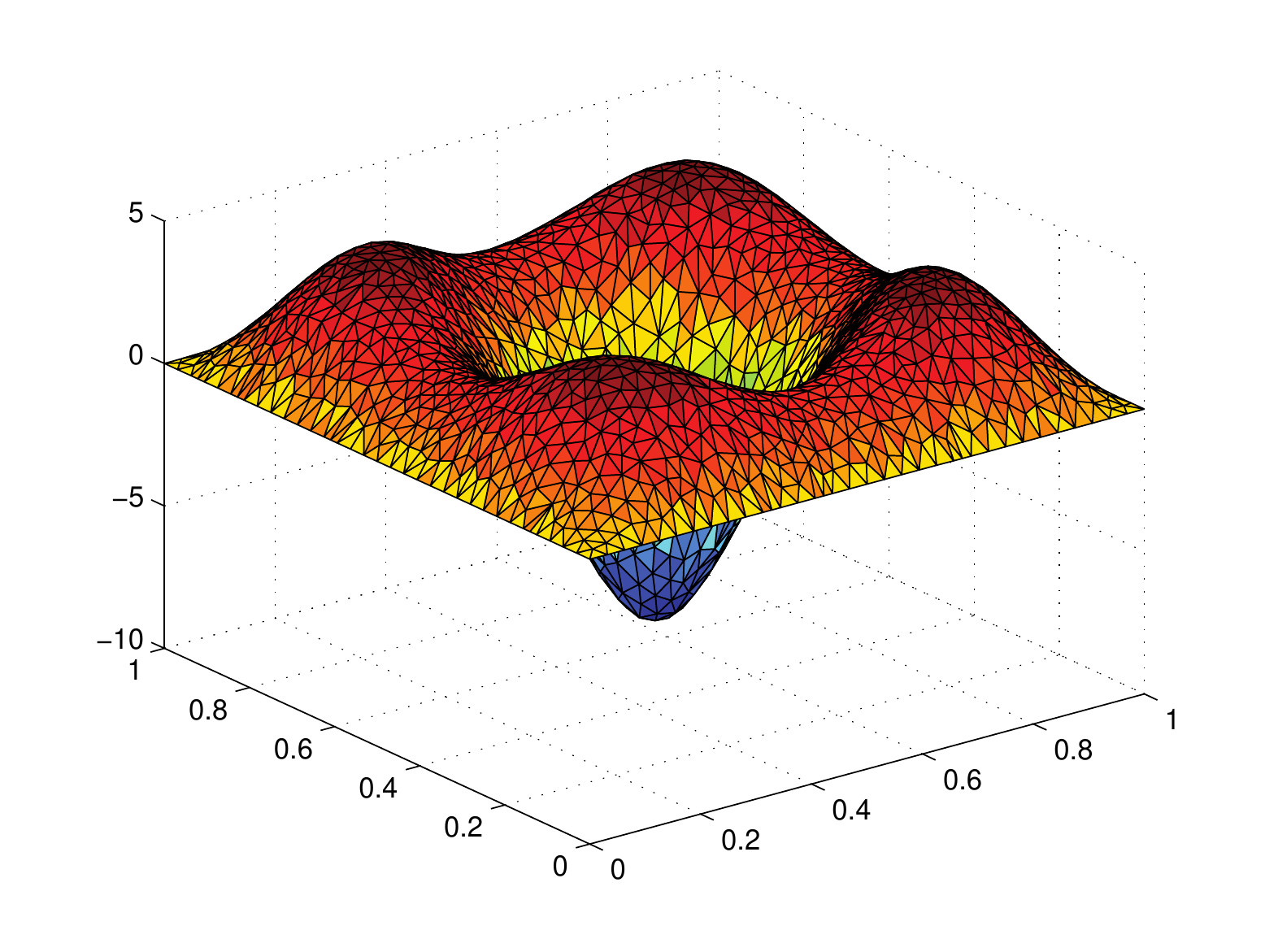}};
    \node[fill=white] at (-10mm,18mm) {$V=-80$};
  \end{tikzpicture}
  \hfill\null
  \caption{MPA solutions for an indefinite problem on a square}
  \label{carrelam}
\end{figure}

\begin{table}[h]
  \begin{center}
    \begin{tabular}{>{\vrule height 2.3ex depth 0.9ex width 0pt}
        r r@{.}>{$}l<{$} r r@{.}l}
      \multicolumn{1}{c}{$V$}&
      \multicolumn{2}{c}{$\norm{\nabla \E}$}
      & \multicolumn{1}{c}{\# of steps} & \multicolumn{2}{c}{$\E(u)$}\\
      \hline $0$ &   6&0\cdot 10^{-5} & 7& 37&89\\
      \hline $-21$ & 6&4\cdot 10^{-5} & 48& 70&43\\
      \hline $-50$ & 5&3\cdot 10^{-5} & 113& 91&42\\
      \hline $-80$ & 6&5\cdot 10^{-5} & 44& 35&06
    \end{tabular}
  \end{center}
  \caption{Characteristics of approximate solutions to an indefinite problem.}
  \label{tablelam}
\end{table}

For $V = 0$, we remark that the approximation is even w.r.t.\
any symmetry of the square
and is positive. It was expected and it is actually already known in this case
(i.e.\  for the problem $-\Delta u = \abs{u}^{p-2}u$) that
ground state solutions have the same symmetries
as the first eigenfunctions of $-\Delta$ (see~\cite{gnn,bbgv}).

For $V = -21$, the approximation has two nodal domains and a
diagonal as nodal line. It seems to respect the
symmetries of a second eigenfunction of~$-\Delta$. It can be explained
as follows.  When $V = 0$, it is
proved~\cite{bbgv} that, for $p$ close to $2$, least energy nodal solutions
have the same symmetries as
their projections on the second eigenspace of~$-\Delta$. On the
square, it is even conjectured that the projection  must be a
function odd w.r.t. a diagonal.
In view of the bifurcation diagrams computed by
J.~M.~Neuberger~\cite{Neuberger-GNGA, Neuberger-Newton},
the least energy nodal solution for $V \in \intervaloc{-\lambda_1, 0}$
becomes the solution with lowest energy when
$V \in \intervaloc{-\lambda_2, -\lambda_1}$ and no bifurcation happens
along the way.  So it is reasonable (and this is supported by
the bifurcation diagrams) that they keep the same symmetries
along the whole branch.

We also observe that, for $V = -50$  (resp.\ $-80$), the
approximation  seems to respect the
symmetries of (and has the ``same form'' as) a fourth (resp.\ fifth)
eigenfunction of $-\Delta$. Their
number of bumps corresponds to their Morse index
($\dim\H^{(-)} + 1$).

All those considerations support the conjecture that
if $-\lambda_{n}< V < -\lambda_{n-1}$ then, at least for $p$
  small enough, ground state solutions respect the symmetries of a
  $n^{\text{th}}$ eigenfunction of $-\Delta$.


\subsection{Application to Systems}

In this section we will perform numerical experiments for the
system~\eqref{eqNoris-general}.
The corresponding energy
functional~\eqref{functional-system} is defined on
$\H= H^1_0(\Omega,\IR^k)$ endowed with the norm $\norm{u}^2 =
\int_{\Omega} \abs{\nabla u}^2 = \sum_i \int_{\Omega} \abs{\nabla
  u_i}^2\intd x$.  In \cite{noris}, B.~Noris and G.~Verzini
prove that the minimization of $\E$ on $\N := \bigl\{u\in \A \bigm|
\forall i=1,\dots,k,\ \int_{\Omega}
\abs{\nabla u_i}^2\intd x = \int_{\Omega} \partial_i F(u)u_i\intd x
\bigr\}$, where
$\A  := \{u\in \H \mid u_i\neq 0 \text{ for every } i\}$,
yields a solution $u=(u_1,\dots, u_k) = \sum u_i e_i$
with $u_i \neq 0$ for all $i=1,\dots,k$ provided that the following
assumptions are satisfied:
there exist  $p\in \intervaloo{2,2^*}$, $C_F >0$ and
$\delta >0$ such that, for any $u,\lambda \in \IR^k$, one has
\begin{enumerate}
\item\label{F1} $\sum_{i,j}\abs{\partial^2_{i,j} F(u)}\le C_F \abs{u}^{p-2}$,
  $\sum_{i} \abs{\partial_i F(u)}\le C_F \abs{u}^{p-1}$ and
  $\abs{F(u)}\le C_F \abs{u}^p$;
\item $\sum_{i,j} \partial^2_{i,j} F(u)\lambda_i u_i \lambda_j u_j -
  (1+\delta) \sum_{i}\partial_i F(u)\lambda_i^2 u_i\ge 0$;
\item for every $i$ there exists $\bar{u}_i >0$ such that $\partial_i
  F(\bar{u}_i e_i)>0$;
\item\label{F4}
  $\partial_i F(u)u_i \le \partial_i F(u_ie_i)u_i$ for every $i$.
\end{enumerate}
The first three assumptions are traditional in the framework of
variational methods. The last one insures $u_i\ne 0$ for all $i$.
In this section, we will use the Mountain Pass
Algorithm~\ref{mpa} with the following peak selection.
For any $u= (u_1,\dots, u_k)\in \A$, we consider the
cone $C_u := \{(t_1 u_1,\dots, t_k u_k) \mid t_i\ge 0 \text{ for all }
i=1,\dots,k\}$.
The peak selection $\phi$ for $(C_u)_{u\in\A}$ is the map
\begin{equation*}
  \phi: \A\to \A : u\mapsto \phi(u)
\end{equation*}
such that $\phi(u)$ maximizes $\E$ on $C_u$. Under the additional
hypothesis that $\sum_i \partial_i F(u) u_i \ge 0$, the second
assumption plays the role of the traditional super-quadraticity and
implies that $\phi$ is well-defined as a peak selection. In
fact, if $u\in \A$
verifies $\operatorname{d} \E(u)[(\lambda_1
u_1,\dots, \lambda_k u_k)] =0$ for any $(\lambda_1,\dots,\lambda_k)\in \IR^k$
then   $u$ is a strict local maximum of
$\E$ on $C_u$. It implies the uniqueness of the global maximum of $\E$
on $C_u$. Moreover, $\phi$ is continuous.

To see that assumption~\eqref{Ac4} is satisfied, it suffices to
take $E = \{0\}$ and, for
$i=1,\dots,k$, $P_i(u) = P_i\bigl((u_1,\dots, u_k)\bigr) := u_i e_i$
i.e., $P_i$ is the projection on the $i^{\text{th}}$ component of $u$.
Finally,
let us quickly run through the assumptions of Proposition~\ref{conv-P_i}:
\begin{enumerate}
\item it is standard to show that $\E\in\C^1$;
\item $\E$ verifies the Palais-Smale condition on $\Ran \phi$
  (see~\cite{noris});
\item $\inf_{u\in \Ran \phi}\E(u)>-\infty$: actually $\E$ is bounded from
  below by $0$ on $\Ran \phi$ (see~\cite{noris});
\item $\dist (\Ran\phi,\partial\A)>0$
  (see~\cite{noris});
\end{enumerate}
In conclusion, Proposition~\ref{conv-P_i} applies and gives
the convergence, up to a
subsequence, of the sequence $(u_n)$ generated by the
Mountain Pass Algorithm~\ref{mpa}.

\medskip
\paragraph{Numerical experiments}

For the numerical experiments, we will consider the following
particular case of equation~\eqref{eqNoris-general}:
\begin{equation}
  \label{eqNoris}
  \left\{
    \begin{aligned}
      -\Delta u_i(x)
      &= \mu_i u_i^3 + u_i\sum_{j\ne i} \beta_{i,j}u_j^2 ,&&\ x\in\Omega, \\
      u_i(x)&=0,&&\ x \in\partial\Omega ,
    \end{aligned}
  \right.
  \qquad  i=1,\dots,k,
\end{equation}
where $\beta_{i,j} = \beta_{j,i}$ and $\Omega$ is a bounded
domain of~$\IR^2$.
This system is modeling a competition between $k$
populations.  We will focus on the case $\Omega = \intervaloo{0,1}^2$
and $k=2$.  In this setting, the assumptions
(\ref{F1})--(\ref{F4}) stated above boild down to
\begin{equation}
  \label{eq:sys-beta-cond}
  \mu_1 > 0,\qquad
  \mu_2 > 0,\qquad\text{and}\qquad
  -\sqrt{\mu_1 \mu_2} \le \beta_{1,2} \le 0.
\end{equation}
Let us remark that  the condition $\sum_i \partial_i F(u) u_i \ge 0$
discussed in the previous section is also verified in this range.

Let us now give the outcome of the algorithm for various choices of
$(\mu_1, \mu_2, \beta_{1,2})$.  The MPA will always start with the
function $u_0 = (u_{0,1}, u_{0,2}) \in \A$ where
$u_{0,1}(x,y) =  u_{0,2}(x,y) = xy(1-x)(1-y)$ and stops when the
norm of the gradient is less than~$10^{-4}$.

First we choose $(\mu_1, \mu_2, \beta_{1,2}) = (1, 4, -1)$.  The
numerical solution $(u_1, u_2)$ is depicted on Figure~\ref{carre
  sys -1} and some characteristics are given in
Table~\ref{table-sys}.  In this case, the assumptions
\eqref{eq:sys-beta-cond} are satisfied so the fact that the algorithm
converges to a solution $(u_1, u_2)$ with $u_1 > 0$ and $u_2 > 0$ is
expected.
Notice also that the solutions $u_1$ and $u_2$ are even w.r.t.\
axes of symmetry of the square.

As second choice, we consider $(\mu_1, \mu_2, \beta_{1,2}) = (1, 4,
0.5)$.  The MPA solution $(u_1, u_2)$ is depicted on Figure~\ref{carre
  sys 0.5} and some characteristics are given in the second row of
Table~\ref{table-sys}.  Despite the fact that the assumptions
\eqref{eq:sys-beta-cond} are not satisfied anymore, the solution is
similar to the found in the first case.  If we enlarge $\beta_{1,2}$
further and choose $(\mu_1, \mu_2, \beta_{1,2}) = (1, 4, 1.2)$, the
algorithm still converges (see the third row of Table~\ref{table-sys})
but this time, the second component vanishes (see Figure~\ref{carre
  sys 1.2}).  What happens is that, at the very first step, $u_2 = 0$
and then the MPA essentially proceeds as if the system was only
consisting in the first equation.

\begin{figure}[h!t]
  \vspace{-0.13\linewidth}%
  \begin{center}
    \includegraphics[width=0.4\linewidth]{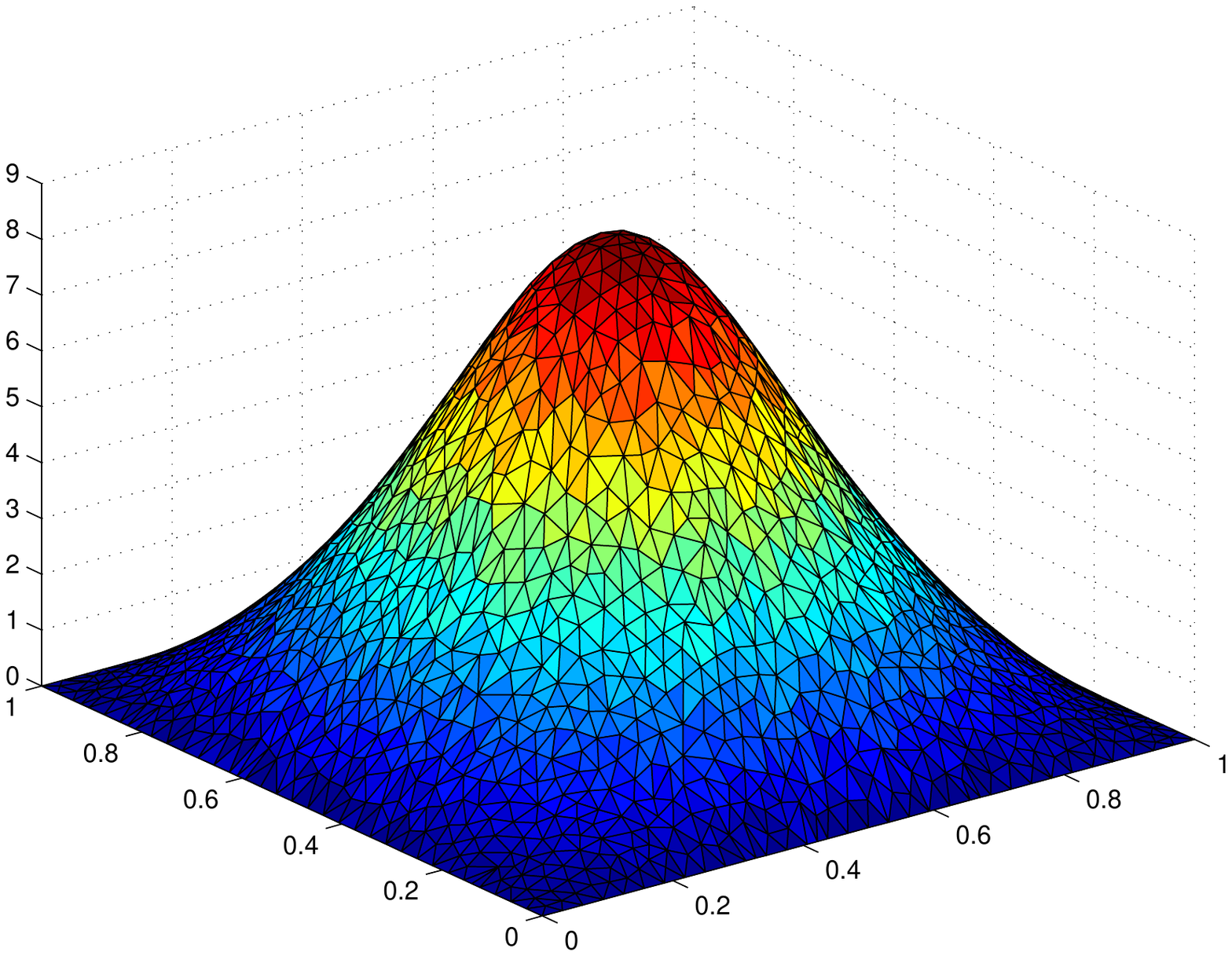}%
    \hfil
    \includegraphics[width=0.4\linewidth]{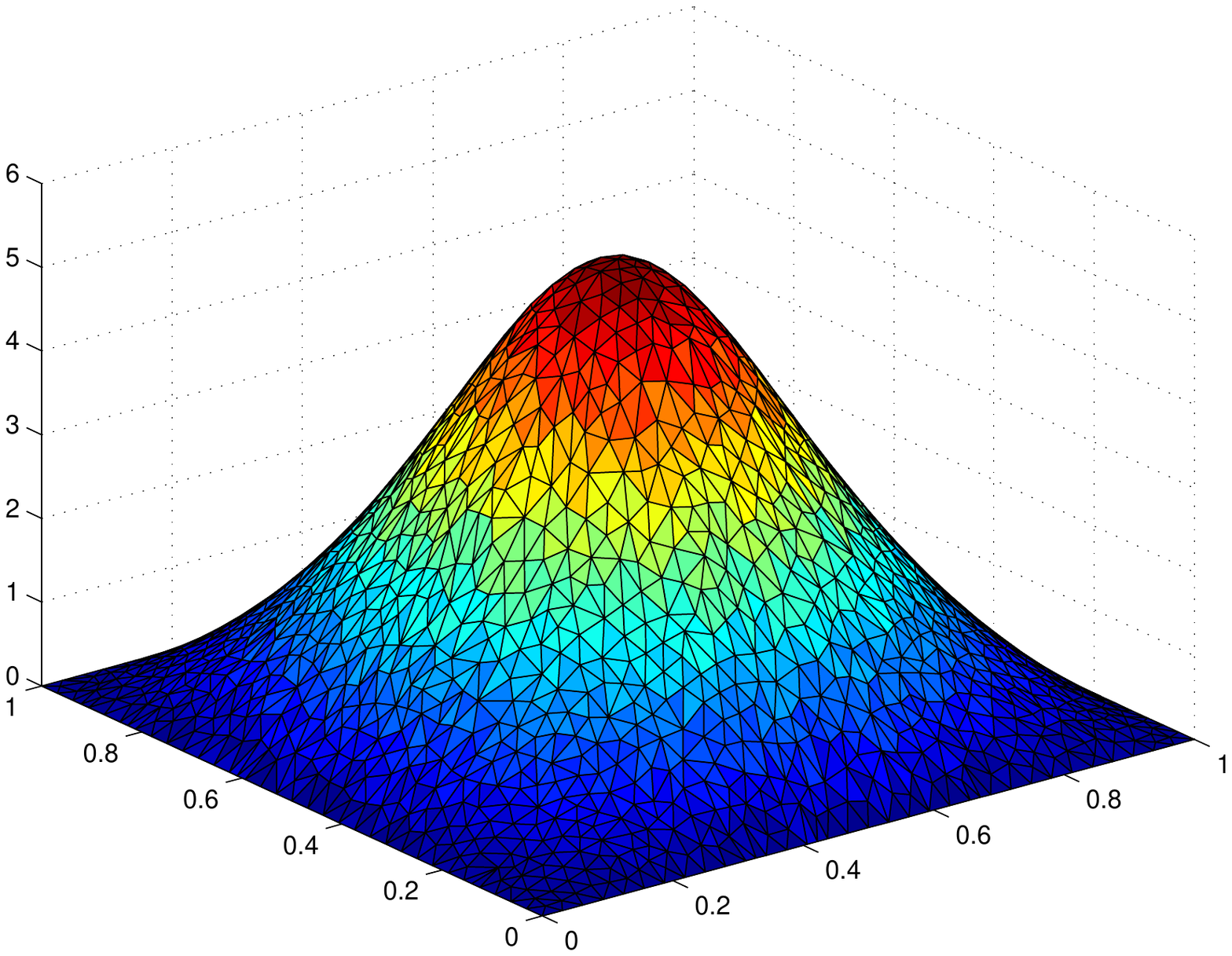}%
  \end{center}
  \vspace{-0.15\linewidth}%
  \caption{MPA solution for the system with
    $(\mu_1, \mu_2, \beta_{1,2}) = (1, 4, -1)$.}
  \label{carre sys -1}
\end{figure}

\begin{figure}[h!t]
  \vspace{-0.13\linewidth}%
  \begin{center}
    \includegraphics[width=0.4\linewidth]{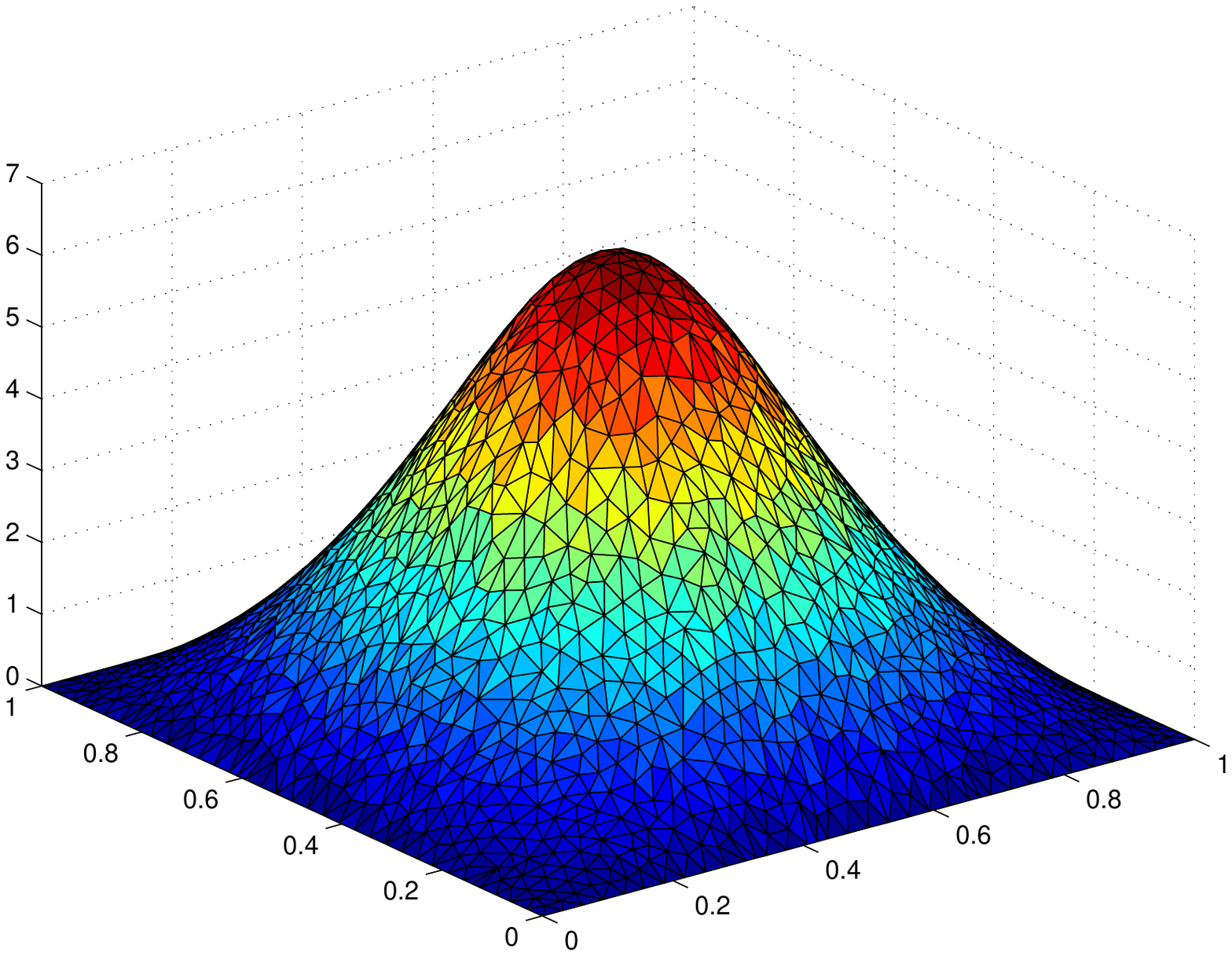}%
    \hfil
    \includegraphics[width=0.4\linewidth]{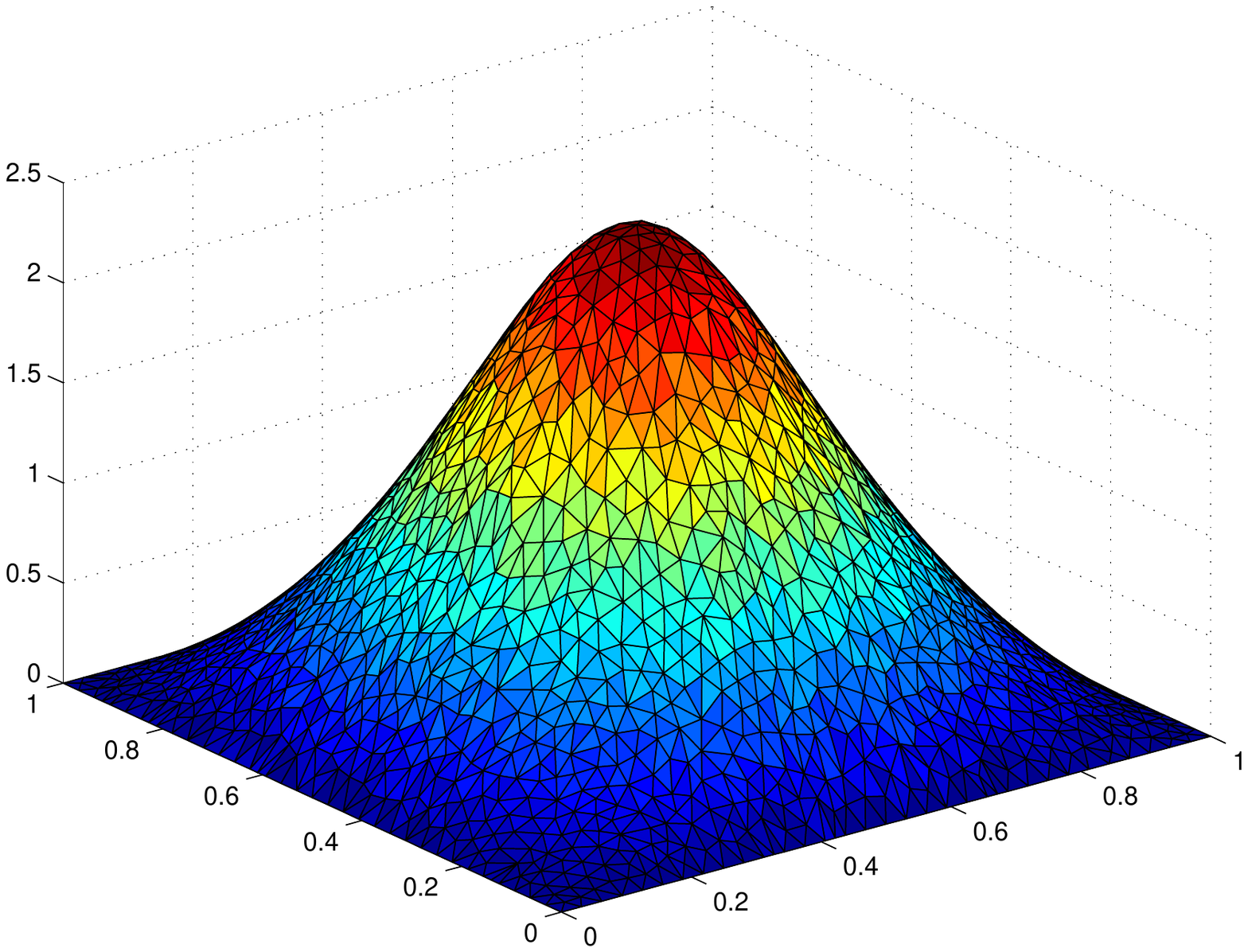}%
  \end{center}
  \vspace{-0.15\linewidth}%
  \caption{MPA solution for the system with
    $(\mu_1, \mu_2, \beta_{1,2}) = (1, 4, 0.5)$.}
  \label{carre sys 0.5}
\end{figure}

\begin{figure}[ht]
  \vspace{-0.13\linewidth}%
  \begin{center}
    \includegraphics[width=0.4\linewidth]{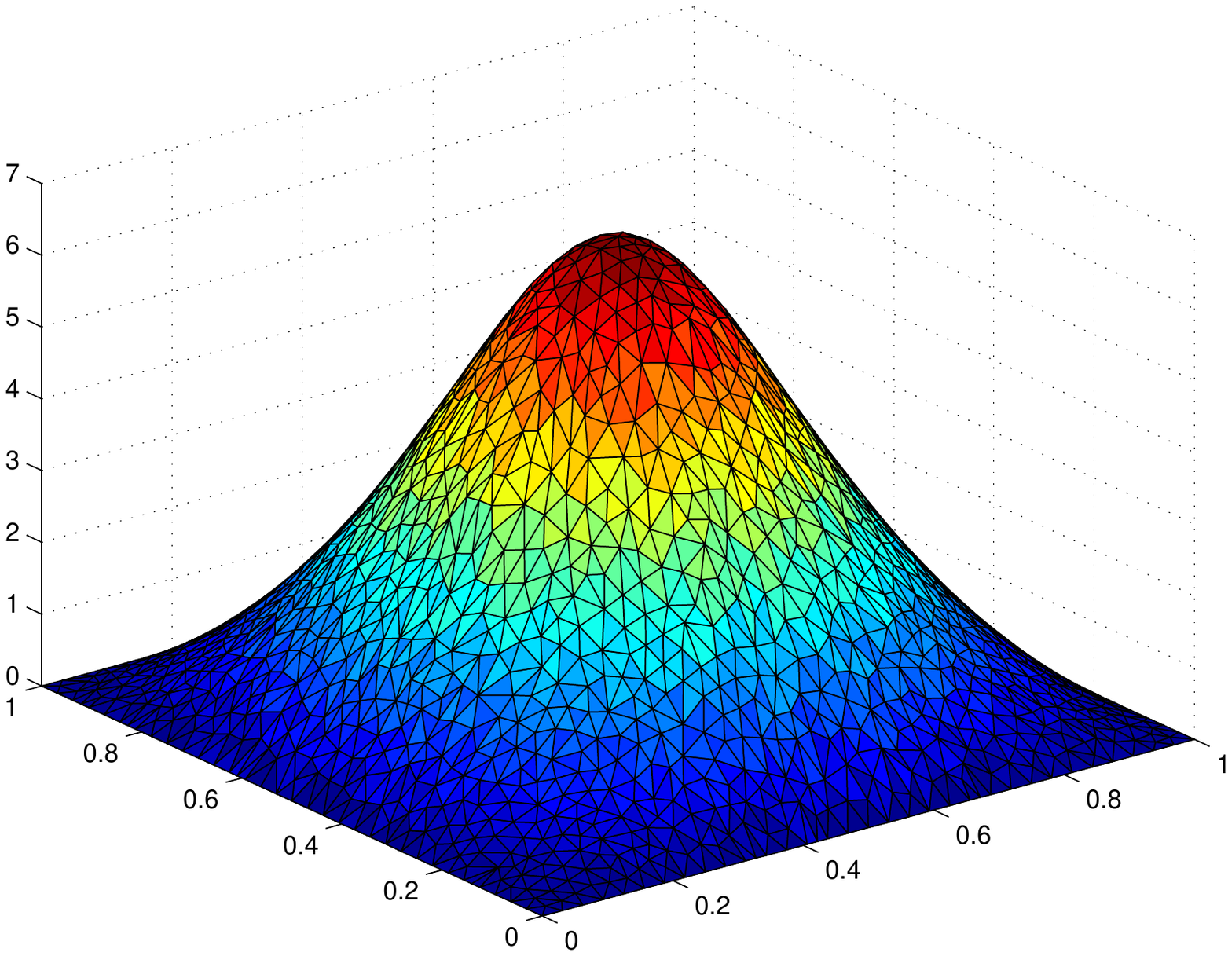}%
    \hfil
    \includegraphics[width=0.4\linewidth]{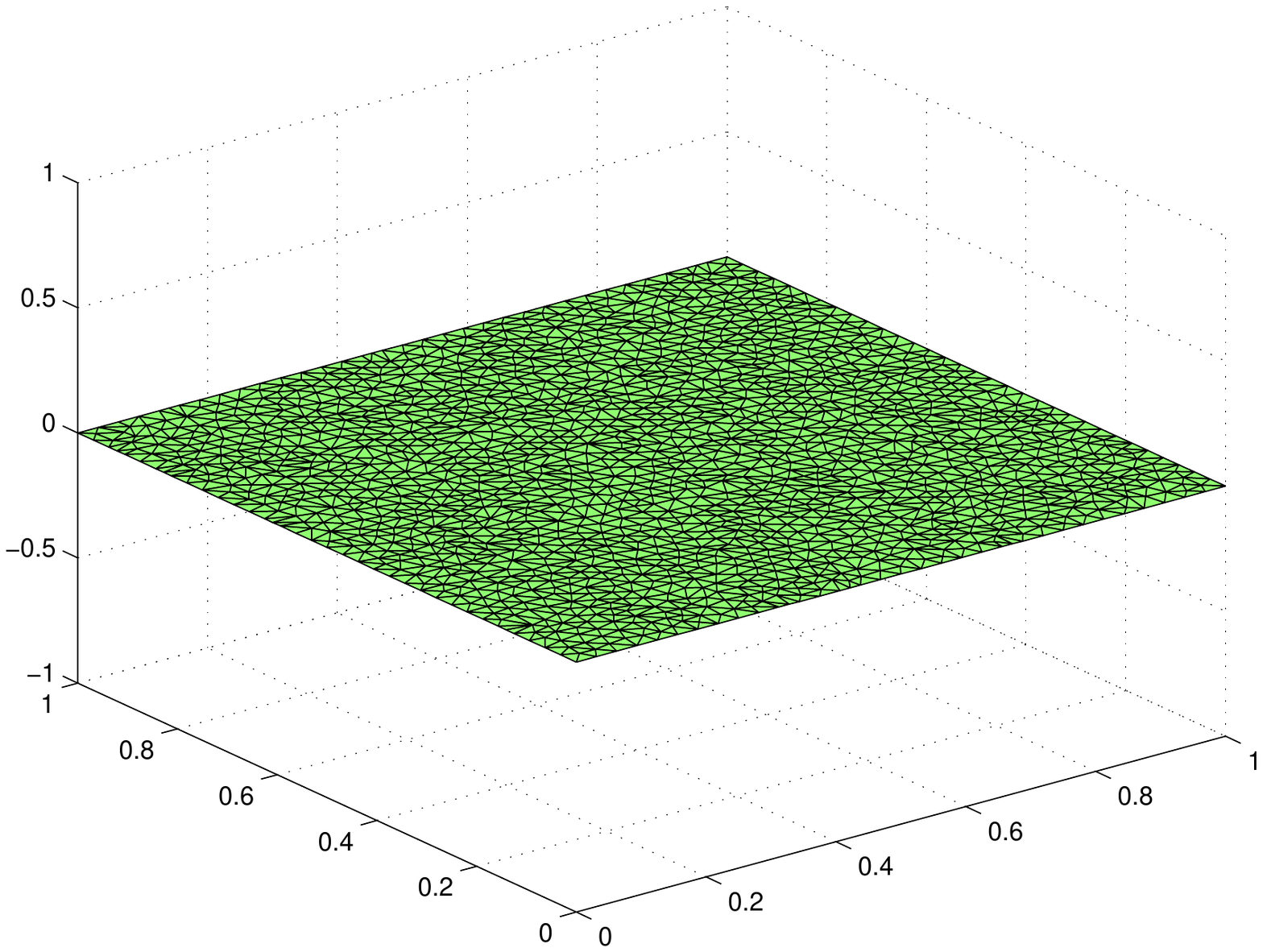}%
  \end{center}
  \vspace{-0.15\linewidth}%
  \caption{MPA solution for the system with
    $(\mu_1, \mu_2, \beta_{1,2}) = (1, 4, 1.2)$.}
  \label{carre sys 1.2}
\end{figure}

\begin{table}[ht]
  \begin{center}
    \begin{tabular}{>{\vrule height 2.3ex depth 0.9ex width 0pt}
        c  r@{.}>{$}l<{$} r r@{.}l r@{.}l r@{.}l}
      $(\mu_1, \mu_2, \beta_{1,2})$&
      \multicolumn{2}{c}{$\norm{\nabla \E(u)}$}
      &\# steps & \multicolumn{2}{c}{$\E(u)$}
      &\multicolumn{2}{c}{$\max u_1$}& \multicolumn{2}{c}{$\max u_2$}\\
      \hline
      $(1, 4, -1)$ & 7&9\cdot 10^{-5} & 11& 88&4&  8&6&  5&4\\ \hline
      $(1, 4, 0.5)$& 5&4\cdot 10^{-5} & 11& 40&4&  6&4&  2&4\\ \hline
      $(1, 4, 1.2)$& 5&2\cdot 10^{-5} & 11& 39&9&  6&6&  0&0
    \end{tabular}
  \end{center}
  \vspace{0.2ex}
  \caption{Characteristics of the solution to system~\eqref{eqNoris}.}
  \label{table-sys}
\end{table}

\bibliographystyle{plain}
\bibliography{MPA}

\end{document}